\newcommand\quotientthree[2]{{#1}\Big/{#2}}
\date{}
\title{On Posets of Classes of Automorphic Subgroups of Finite Groups }
\author{Sachin Ballal\footnote{Corresponding Author} \hspace{0.1cm}and Tushar Halder \vspace{0.2cm}\\ School of Mathematics and Statistics, University of Hyderabad, 500046, India \\ sachinballal@uohyd.ac.in, 24mmpp03@uohyd.ac.in }
\theoremstyle{definition}
\newtheorem{theorem}{Theorem}[section]
\newtheorem{corollary}{Corollary}[theorem]
\newtheorem{lemma}[theorem]{Lemma}
\newtheorem{remark}{Remark}
\providecommand{\keywords}[1]
{
  \small	
  \textbf{{Keywords:}} #1
}
\begin{document}
\maketitle
\begin{abstract}
     In \cite{tarnauceanu2015posetsclassesisomorphicsubgroups}, Tărnăuceanu studied the poset $\text{Iso}(G)$, of isomorphic classes of subgroups of a finite group $G$ and proposed several questions for further research. In this paper, we study the poset $\text{AutCl}(G)$, of classes of automorphic subgroups of finite group $G$. We introduce a partial order on $\text{AutCl}(G)$ to tackle problem 5 mentioned in $\S4$ of \cite{tarnauceanu2015posetsclassesisomorphicsubgroups}. More precisely, we prove that $\text{AutCl}(D_n)$ and $\text{AutCl}(Q_{4m})$ are distributive lattices. Moreover, we characterize all classes of finite groups for which $\text{AutCl}(G)$ is a chain. \vspace{0.2cm}\\
    \keywords{ Lattice, automorphisms, subgroups, poset of class of subgroups, etc.}\\
    \textbf{AMS Subject Classifications:} 06A06, 06B99, 20D30, etc. 
    
\end{abstract}
\section{Introduction}
The theory of subgroup lattices began with Ada Rottländer \cite{Ada} and this study was motivated by some questions arising from field extensions. It is well known that the set of all subgroups of a group forms a lattice, where meet is the intersection of subgroups and join is the subgroup generated by union of subgroups. The study of structure of groups using the lattice of subgroups is a prominent way which is explored by many researchers viz., Iwasawa \cite{iwasawa2001collected}, Schmidt \cite{Schmidt+1994}, Suzuki \cite{Suzuki1956}, etc. \par
In \cite{tarnauceanu2015posetsclassesisomorphicsubgroups}, Tărnăuceanu introduced the poset $\text{Iso}(G)$, which is defined as the set of classes of isomorphic subgroups of $G$ and studied its properties.
For a positive integer $n$, the \emph{dihedral group} of order
$2n$, denoted by $D_n$, is defined as
\begin{equation*}
    D_n=\bigl<r,s\hspace{0.2cm}|\hspace{0.2cm} r^n=e,\hspace{0.2cm}s^2=e,\hspace{0.2cm}srs^{-1}=r^{-1}\bigr>.
\end{equation*} 
In the following theorem, a complete listing of subgroups of $D_n$ is given.
\begin{theorem}
     \cite{conrad} Every subgroup of $D_n$ is cyclic or dihedral. A complete listing of the subgroups is as follows: 
\begin{enumerate}
    \item $\bigl<r^d\bigr>$, where $d|n$, with index $2d$,
    \item $\bigl<r^d,r^is\bigr>$, where $d|n$ and $0\le i\le d-1$, with index $d$.
\end{enumerate}
Every subgroup of $D_n$ occurs exactly once in this listing.
\end{theorem}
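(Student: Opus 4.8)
The plan is to classify an arbitrary subgroup $H \le D_n$ according to whether or not it contains a reflection, i.e.\ an element of the form $r^i s$. The organizing tool is the cyclic normal subgroup $\langle r\rangle \cong \mathbb{Z}/n\mathbb{Z}$, which has index $2$ in $D_n$. Since $H \cap \langle r\rangle$ is a subgroup of a cyclic group, it equals $\langle r^d\rangle$ for a unique divisor $d$ of $n$ (with $d = n$ corresponding to the trivial subgroup), because the subgroups of $\mathbb{Z}/n\mathbb{Z}$ are exactly $\langle r^d\rangle$ for $d \mid n$, each occurring once. Moreover, by the second isomorphism theorem $H/(H\cap\langle r\rangle) \cong H\langle r\rangle/\langle r\rangle \le D_n/\langle r\rangle \cong \mathbb{Z}/2\mathbb{Z}$.

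First I would treat the case $H \subseteq \langle r\rangle$: then $H = \langle r^d\rangle$ for some $d \mid n$, and since $|\langle r^d\rangle| = n/d$ its index in $D_n$ is $2d$, giving the subgroups of type (1); these are cyclic, and distinct divisors yield subgroups of distinct orders. Next, suppose $H$ contains a reflection $r^i s$, and write $H \cap \langle r\rangle = \langle r^d\rangle$ with $d \mid n$. Since $H \not\subseteq \langle r\rangle$, the embedding above is onto $\mathbb{Z}/2\mathbb{Z}$, so $[H:\langle r^d\rangle] = 2$, hence $|H| = 2n/d$ and $H$ has index $d$. I would then show $H = \langle r^d, r^i s\rangle$: one inclusion is immediate, and conversely any element of $H$ is either a power of $r$, hence in $\langle r^d\rangle$, or a reflection $r^j s$; using $(r^i s)^2 = e$ and $s r^k = r^{-k} s$ one computes $(r^j s)(r^i s) = r^{j-i} \in H\cap\langle r\rangle = \langle r^d\rangle$, so $d \mid (j-i)$ and $r^j s = r^{j-i}\cdot r^i s \in \langle r^d, r^i s\rangle$. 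Since $r^d \in H$, replacing $i$ by its residue modulo $d$ does not change the subgroup, so we may take $0 \le i \le d-1$. Finally $(r^i s) r^d (r^i s) = r^{-d}$, so $\langle r^d\rangle \trianglelefteq H$ with complement of order $2$ acting by inversion, i.e.\ $H$ is dihedral of order $2n/d$ — degenerating to $\mathbb{Z}/2\mathbb{Z}$ when $n/d = 1$ and to the Klein four-group (which is $D_2$) when $n/d = 2$. This produces the subgroups of type (2), and together with the first case shows every subgroup appears in the list.

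For the uniqueness claim I would recover the parameters $(d,i)$ intrinsically from $H$. A type (1) and a type (2) subgroup can never coincide, since the former contains no reflection and the latter does. Within the type (2) family, $H\cap\langle r\rangle = \langle r^d\rangle$ determines $d$, and the set of reflections in $H$ is precisely the coset $\{\,r^{i+kd}s : k\in\mathbb{Z}\,\}$, which determines $i \bmod d$ and hence the normalized representative $i\in\{0,\dots,d-1\}$; within the type (1) family, $\langle r^d\rangle = \langle r^{d'}\rangle$ with $d,d'\mid n$ forces $d = d'$. (As a numerical check, this gives $\tau(n)$ subgroups of type (1) and $\sum_{d\mid n} d = \sigma(n)$ of type (2).)

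I do not expect a genuine obstacle; the only thing requiring care is the bookkeeping in the second case — keeping the relation $srs^{-1} = r^{-1}$ straight when simplifying products of reflections, verifying the normalization of $i$ modulo $d$ is well defined, and making sure the phrase ``cyclic or dihedral'' is understood to absorb the small degenerate orders (trivial group, $\mathbb{Z}/2\mathbb{Z}$, and the Klein four-group).
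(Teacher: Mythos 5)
The paper does not prove this statement at all --- it is quoted as a known result from the cited reference \cite{conrad} --- so there is no internal proof to compare yours against. Your argument is correct and complete, and it is the standard one: intersecting $H$ with the index-$2$ cyclic subgroup $\langle r\rangle$ to extract the divisor $d$, splitting on whether $H$ contains a reflection, using $(r^js)(r^is)=r^{j-i}$ to pin every reflection of $H$ into the coset $r^i s\langle r^d\rangle$, and recovering $(d, i \bmod d)$ intrinsically from $H$ for uniqueness. The only points needing the care you already flagged are the degenerate orders (where ``dihedral'' must be read to include the trivial group, $\mathbb{Z}/2\mathbb{Z}$, and the Klein four-group) and the well-definedness of the normalization $0\le i\le d-1$; both are handled. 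Your closing count of $\tau(n)$ cyclic and $\sigma(n)$ dihedral-type subgroups is a useful consistency check that the paper itself does not state.
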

\begin{remark}
\begin{enumerate}
    \item A subgroup of $D_n$ is said to be of \textbf{Type (1)} if it is cyclic as stated in (1) of Theorem 1.1.
    \item A subgroup of $D_n$ is said to be of \textbf{Type (2)} if it is dihedral subgroup as stated in (2) of Theorem 1.1.
\end{enumerate}
\end{remark}
For $m\ge 2$, the \emph{more generalized quaternion group} of order $4m$, denoted by $Q_{4m}$, is defined as:
\begin{equation*}
    Q_{4m}=\bigl<x,y \hspace{0.2cm}|\hspace{0.2cm} x^{2m}=e=y^4, \hspace{0.2cm} yxy^{-1}=x^{-1}, \hspace{0.2cm} x^m=y^2 \bigr>.
\end{equation*}
Note that for $m=1,2$, we have $Q_4\cong \mathbb{Z}_4$ and $Q_8$ is the usual quaternion group with 8 elements. 
In the following theorem, a complete listing of subgroups of $Q_{4m}$ is given.
\begin{theorem} \cite{conradquad}
    For $m\ge 1$, every subgroup of $Q_{4m}$ is cyclic or dicyclic. A complete listing of subgroups of $Q_{4m}$ is as follows:
    \begin{enumerate}
        \item $\bigl<x^d\bigr>$, where $d|2m$, with index $2d$,
        \item $\bigl<x^d,x^iy\bigr>$, where $d|m$ and $0\le i \le d-1$, with index $d$.
    \end{enumerate}
    Every subgroup of $Q_{4m}$ occurs exactly once in this listing.
\end{theorem}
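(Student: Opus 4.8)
The plan is to work with the cyclic subgroup $\langle x\rangle$, which has order $2m$ and index $2$ in $Q_{4m}$ and is therefore normal, together with the fact that every element of $Q_{4m}$ outside $\langle x\rangle$ is uniquely of the form $x^iy$ with $0\le i\le 2m-1$. First I would record the arithmetic of these outside elements: from $yxy^{-1}=x^{-1}$ and $x^m=y^2$ one gets
\[
(x^iy)^2=x^i\bigl(yx^iy^{-1}\bigr)y^2=x^ix^{-i}x^m=x^m ,
\]
so every element of $Q_{4m}\setminus\langle x\rangle$ has order exactly $4$ (as $x^m$ has order $2$) and $x^m=y^2$ is the unique involution of $Q_{4m}$. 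These identities drive the whole classification.

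Next I would split a subgroup $H\le Q_{4m}$ into two cases according to whether $H\subseteq\langle x\rangle$. If $H\subseteq\langle x\rangle$, then $H$ is a subgroup of a cyclic group of order $2m$, so $H=\langle x^d\rangle$ for a unique divisor $d$ of $2m$; since $x^d$ has order $2m/d$, its index in $Q_{4m}$ is $4m/(2m/d)=2d$, which is exactly the Type~(1) list. If $H\not\subseteq\langle x\rangle$, then $H\cap\langle x\rangle$ is the kernel of the nontrivial composite $H\hookrightarrow Q_{4m}\twoheadrightarrow Q_{4m}/\langle x\rangle\cong\mathbb Z_2$, hence has index $2$ in $H$; being a subgroup of $\langle x\rangle$ it equals $\langle x^d\rangle$ for some $d$, and for any $x^iy\in H$ we get $H=\langle x^d\rangle\cup x^iy\langle x^d\rangle=\langle x^d,x^iy\rangle$.

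The decisive point is the extra divisibility $d\mid m$ in the second case: since $x^iy\in H$, the identity above gives $x^m=(x^iy)^2\in H\cap\langle x\rangle=\langle x^d\rangle$, which forces $d\mid m$. Conversely, for every divisor $d$ of $m$ and every $i$ the set $\langle x^d\rangle\cup x^iy\langle x^d\rangle$ is closed under the group operations — the only nonroutine check is that a product of two coset elements, $(x^{i+kd}y)(x^{i+\ell d}y)=x^{(k-\ell)d+m}$, lies in $\langle x^d\rangle$, and this holds precisely because $d\mid m$ — so it is a subgroup of order $2\cdot(2m/d)=4m/d$ and index $d$, isomorphic to the dicyclic group $Q_{4m/d}$ (degenerating to $\mathbb Z_4$ when $d=m$). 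Replacing $i$ by its residue modulo $d$ does not change the subgroup, which yields the normal form $0\le i\le d-1$ of the Type~(2) list and confirms that every subgroup is cyclic or dicyclic.

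Finally I would check the listing has no repetitions. A Type~(2) subgroup contains an element outside $\langle x\rangle$ and a Type~(1) subgroup does not, so the two families are disjoint; within Type~(1), $\langle x^d\rangle$ determines $d$; within Type~(2), intersecting with $\langle x\rangle$ recovers $\langle x^d\rangle$ and hence $d$, after which the cosets $x^iy\langle x^d\rangle$ for $0\le i\le d-1$ are pairwise distinct. I expect the only genuinely delicate feature to be the asymmetry between the condition $d\mid 2m$ for Type~(1) and $d\mid m$ for Type~(2), together with the order-$4$ computation that forces it; the remainder is coset bookkeeping running exactly parallel to the dihedral case of Theorem~1.1.
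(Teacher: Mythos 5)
Your proof is correct; the paper itself gives no argument for this theorem (it is quoted from the cited reference), and your route --- splitting on whether $H\subseteq\langle x\rangle$, using that $\langle x\rangle$ is normal of index $2$, and extracting the condition $d\mid m$ from $(x^iy)^2=x^m\in H\cap\langle x\rangle=\langle x^d\rangle$ --- is exactly the standard classification argument the reference uses. All the computations check out, including the closure verification $(x^{i+kd}y)(x^{i+\ell d}y)=x^{(k-\ell)d+m}$ and the distinctness of the subgroups for $0\le i\le d-1$.
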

 We denote the lattice of subgroups of a group $G$ by $L(G)$ and the identity element of $G$ by $e$. The \emph{exponent} of a finite group $G$ is the smallest positive integer $n$, such that for all $g\in G$, $g^n=e$. We denote the \emph{chain} with $n$ elements by $C_n$. The lattice $M_2$ stands for the lattice with 4 elements as shown in Figure 1.
 \begin{figure}[h]
    \centering
    \begin{tikzpicture}[scale=1.5,  every node/.style={circle, fill=black, minimum size=6pt, inner sep=0pt}]
				
				% Nodes in set U
				\node[label=left:] (u1) at (-0.5,0) {};
				\node[label=left:] (u3) at (0.5,0) {};
				\node[label=left:] (u4) at (0,0.5) {};
				\node[label=left:] (u5) at (0,-0.5) {};
            
				% Edges
				\draw (u1) -- (u4);
				\draw (u4) -- (u3);
				\draw (u3) -- (u5);
				\draw (u5) -- (u1);

		\end{tikzpicture}
        \caption{$M_2$}
\end{figure}\\
 We denote the lattice of positive divisors of an integer $n$ by $T(n)$. A \emph{homomorphism} $\phi$ from group $(G_1, *_1)$ to group $(G_2,*_2)$ is a map such that $\phi(x*_1y)=\phi(x)*_2\phi(y)$, for all $x,y \in G_1$. An \emph{automorphism} of a group is a bijective homomorphism from the group to itself. The set of all automorphisms of a group $G$ forms a group and is denoted by $\text{Aut}(G)$. A finite lattice $L$ with the smallest element 0 and the largest element 1 is said to be \emph{complemented} if for $a\in L$, there is $b\in L$ such that $a\wedge b=0$ and $a\vee b=1$. For a finite cyclic group $\bigl<a\bigr>$, the order of $a^k$ is $\frac{|a|}{\text{gcd}(|a|,k)}$. For an odd prime $p$, upto isomorphism, there is a unique non abelian group of order $p^3$ with exponent $p$. This group is isomorphic to the group of all upper unitriangular $3\times 3$ matrices over $\mathbb{Z}_p$ and is denoted by $\text{Heis}(\mathbb{Z}_p)$.\begin{equation*}
    \text{Heis}(\mathbb{Z}_p)\cong \Biggl\{\begin{pmatrix}
        1 & x_1 &x_2 \\ 0 & 1 & x_3 \\ 0 & 0 &1 
    \end{pmatrix}  \hspace{0.2cm}|\hspace{0.2cm} x_1,x_2,x_3 \in \mathbb{Z}_p \Biggr\}
\end{equation*} Note that $\text{Heis}(\mathbb{Z}_p)$ is also isomorphic to $ \bigl<x,y,z \hspace{0.2cm}|\hspace{0.2cm} xyx^{-1}y^{-1}=z, \hspace{0.2cm}xz=zx, \hspace{0.2cm} yz=zy, \hspace{0.2cm}x^p=y^p=z^p=e \bigr>$.   \par
    In $\S 2$, we study the poset, $\text{AutCl}(G)$ of classes of automorphic subgroups of a finite group $G$, in particular for dihedral group $D_n$ and more generalized quaternion group $Q_{4m}$. We prove that the poset of automorphic class of subgroups of $D_n$ and $Q_{4m}$ form distributive lattices. In $\S3$, we characterize finite groups $G$, for which $\text{AutCl}(G)$ is a chain. This characterization turns out to be very similar to that of $\text{Iso}(G)$ as described in \cite{tarnauceanu2015posetsclassesisomorphicsubgroups}. Lastly in $\S4$, we raise some questions regarding $\text{AutCl}(G)$. \par
   For more details on lattices, groups and subgroup lattices, one may refer (\cite{birkhoff1940lattice},\cite{Gratzer},\cite{davey2002introduction}),  (\cite{robinson1996course},\cite{suzuki1982group},\cite{suzuki1986group}) and (\cite{Schmidt+1994},\cite{Suzuki1956},\cite{bookTarn}), respectively.
   
\section{The Automorphic Classes of Subgroups $\text{AutCl}(\textbf{\emph{G}})$}
In this section, we study $\text{AutCl}(G)$, the automorphic classes of subgroups of a finite group $G$. We show that $\text{AutCl}(D_n)$ and $\text{AutCl}(Q_{4m})$ are distributive lattices. \par 
In \cite{tarnauceanu2015posetsclassesisomorphicsubgroups}, Tărnăuceanu proposed a problem to study the classes of subgroups of a finite group $G$ with respect to the equivalence relation $\equiv$ on $L(G)$, where $\equiv$ is defined as follows: 
\begin{equation*}
    H\equiv K \hspace{0.1cm}\text{if and only if}\hspace{0.1cm} \text{there is} \hspace{0.1cm}f \in \text{Aut}(G) \hspace{0.1cm}\text{such that} \hspace{0.1cm}f(H) = K. 
\end{equation*}
\begin{lemma}
Let $G$ be a finite group. Define a relation $\lesssim$ on the set of equivalence classes ($\quotientthree{L(G)}{\equiv}$), of subgroups of $G$, as follows: 
\begin{equation}
    [H]\lesssim [K] \hspace{0.1cm} \text{if and only if } \text{there are}\hspace{0.1cm} H_1 \in [H],\hspace{0.1cm} K_1 \in [K] \hspace{0.1cm}\text{and } f \in \text{Aut}(G) \hspace{0.1cm}\text{such that } f(H_1) \subseteq K_1.
\end{equation}
Then $\biggl(\quotientthree{L(G)}{\equiv}, \lesssim \bigg)$ is a partially ordered set.     
\end{lemma}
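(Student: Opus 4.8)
The plan is to verify the three defining properties of a partial order for $\lesssim$ on $\quotientthree{L(G)}{\equiv}$: reflexivity, transitivity, and antisymmetry. First I would observe that $\lesssim$ is well defined without any extra argument, since its definition already quantifies existentially over the choice of representatives $H_1 \in [H]$ and $K_1 \in [K]$; a change of representatives on either side is absorbed into the existential quantifier. Reflexivity is then immediate: taking $H_1 = K_1 = H$ and $f$ the identity automorphism of $G$, we have $f(H_1) = H \subseteq H = K_1$, so $[H] \lesssim [H]$.

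For transitivity, suppose $[H] \lesssim [K]$ and $[K] \lesssim [L]$, witnessed respectively by $f \in \text{Aut}(G)$ with $f(H_1) \subseteq K_1$ (where $H_1 \in [H]$, $K_1 \in [K]$) and by $g \in \text{Aut}(G)$ with $g(K_2) \subseteq L_2$ (where $K_2 \in [K]$, $L_2 \in [L]$). The key idea is that $K_1$ and $K_2$ lie in the same class, so there is $h \in \text{Aut}(G)$ with $h(K_1) = K_2$; then $g \circ h \circ f$ is again an automorphism of $G$ and $(g \circ h \circ f)(H_1) \subseteq g(h(K_1)) = g(K_2) \subseteq L_2$, which gives $[H] \lesssim [L]$. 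The only fact used here is that $\text{Aut}(G)$ is closed under composition.

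The substantive step is antisymmetry, and this is where finiteness of $G$ enters. Suppose $[H] \lesssim [K]$ and $[K] \lesssim [H]$, with witnesses $f(H_1) \subseteq K_1$ as above. Since an automorphism is a bijection, it preserves the order of a subgroup, so $|H| = |H_1| = |f(H_1)| \le |K_1| = |K|$; arguing symmetrically from $[K] \lesssim [H]$ gives $|K| \le |H|$, hence $|H| = |K|$. Feeding this back into $f(H_1) \subseteq K_1$, we have two finite subgroups of the same order, one contained in the other, so in fact $f(H_1) = K_1$; thus $H_1 \equiv K_1$, and combining with $H \equiv H_1$ and $K \equiv K_1$ and transitivity of $\equiv$ yields $H \equiv K$, i.e. $[H] = [K]$.

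I do not expect a genuine obstacle here; the one place to be careful is the antisymmetry argument, where the tempting but incorrect move is to try to show that the two witnessing automorphisms are mutually inverse. Instead, one should exploit the cardinality bound supplied by finiteness to upgrade the two inclusions into a single equality of subgroups, after which the conclusion $[H] = [K]$ is forced. (It is worth noting that this is precisely the point at which the hypothesis that $G$ is finite is indispensable.)
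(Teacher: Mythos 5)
Your proposal is correct and follows essentially the same route as the paper: reflexivity via the identity automorphism, transitivity by composing automorphisms (your bridging automorphism $h$ with $h(K_1)=K_2$ plays exactly the role of the paper's Remark~2 on independence of representatives), and antisymmetry via the cardinality argument that upgrades the two inclusions to an equality of finite subgroups. No gaps.
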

\begin{remark}
    The relation defined in (1) is independent of the choice of representative. If $[H]\lesssim [K]$, then there are $H_1\in [H]$, $K_1\in [K]$ and $f\in \text{Aut}(G)$ with $f(H_1)\subseteq K_1$. Moreover, by the definition of $\equiv$, there are $\phi_1, \phi_2 \in \text{Aut}(G)$ with $\phi_1(H)=H_1$ and $\phi_2(K_1)=K$, consequently, $\phi_2\circ f\circ\phi_1 (H)\subseteq K$. 
\end{remark}

\begin{proof}
    In the light of Remark 2, reflexivity of $\lesssim$ follows immediately, as identity automorphism maps any subgroup of $G$ to itself. \par 
For antisymmetry, let $[H_1],[H_2] \in (\quotientthree{L(G)}{\equiv})$ be such that $[H_1]\lesssim [H_2]$ and $[H_2]\lesssim [H_1]$. So, there are $f_1, f_2 \in \text{Aut}(G)$ with $f_1(H_1)\subseteq H_2$ and $f_2(H_2)\subseteq H_1$. As, $|H_1|=|f_1(H_1)|\le |H_2|=|f_2(H_2)|\le |H_1|$, so $|f_1(H_1)|=|H_2|$, which implies $f_1(H_1)=H_2$ and consequently, $[H_1]=[H_2]$. \par
Now, for transitivity, let $[H_1]\lesssim[H_2]$ and $[H_2]\lesssim [H_3]$, then there are maps $f_1,f_2\in \text{Aut}(G)$ with $f_1(H_1)\subseteq H_2$ and $f_2(H_2)\subseteq H_3$, so, $f_2\circ f_1(H_1)\subseteq H_3$ and consequently, $[H_1]\lesssim [H_3]$. 
\end{proof}
 Henceforth, we will call the partially ordered set $\biggl ( \quotientthree{L(G)}{\equiv}, \lesssim \biggr)$ as \emph{the poset of automorphic classes of subgroups of finite group} $G$ and will denote it by $\text{AutCl}(G)$. \vspace{0.2cm}\\
 \textbf{Examples:} \begin{enumerate}
     \item For a natural number $n$, consider the cyclic group $\mathbb{Z}_n$. The map $\phi: L(\mathbb{Z}_n) \to \text{AutCl}(\mathbb{Z}_n),$ defined by $H \longmapsto [H]$, is a join and meet isomorphism between the lattice $L(\mathbb{Z}_n)$ and the poset $\text{AutCl}(\mathbb{Z}_n)$ and hence, $\text{AutCl}(\mathbb{Z}_n)$ is a lattice.
     \item $\text{AutCl}(\mathbb{Z}_2 \times \mathbb{Z}_2)\cong C_3$ and $\text{AutCl}(Q_8)\cong C_4$, where $Q_8$ is the quaternion group with 8 elements.
 \end{enumerate}
 In the next result, we show that, if $[H]\in \text{AutCl}(G)$ has a complement, then $H\in L(G)$ has a complement. 
\begin{theorem}
    Let $G$ be a finite group. If $\text{AutCl}(G)$ is complemented lattice, then $L(G)$ is also complemented. More precisely, let $(\text{AutCl(G)}, \wedge',\vee')$  be a lattice such that for some $[H]\in \text{AutCl(G)}$, there exists $[K]\in \text{AutCl(G)}$ with \begin{equation*}
    [H]\wedge ' [K]=[\{e\}] \hspace{0.2cm} \text{and} \hspace{0.2cm} [H]\vee' [K]=[G],
\end{equation*}  
then \begin{equation*}
    H\wedge K= \{e\} \hspace{0.2cm} \text{and} \hspace{0.2cm} H \vee K=G \hspace{0.2cm}\text{in } (L(G),\wedge,\vee).
\end{equation*}
\end{theorem}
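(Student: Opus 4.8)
The statement has two parts to prove: $H \wedge K = \{e\}$ and $H \vee K = G$ in $L(G)$. The natural approach is to unwind the definitions of $\wedge'$ and $\vee'$ in $\mathrm{AutCl}(G)$ and translate back to $L(G)$ using the order relation $\lesssim$ together with Remark~2. The key observation I would use throughout is a cardinality/order fact: since every $f \in \mathrm{Aut}(G)$ is a bijection, $[H] \lesssim [K]$ forces $|H| \le |K|$, with equality iff $[H] = [K]$ (this is exactly the mechanism used in the antisymmetry argument of Lemma~1.4).

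\textbf{The meet part.} From $[H] \wedge' [K] = [\{e\}]$, I first need to identify what the meet of $[H]$ and $[K]$ is concretely. Since $[\{e\}]$ is the least element of $\mathrm{AutCl}(G)$ (every automorphism fixes $\{e\}$, so $[\{e\}] \lesssim [L]$ for all $L$), the claim $[H]\wedge'[K] = [\{e\}]$ just says that the only lower bound of $\{[H],[K]\}$ is $[\{e\}]$. Now consider the honest intersection $H \wedge K = H \cap K$ in $L(G)$: since $H \cap K \subseteq H$ and $H\cap K \subseteq K$ via the identity automorphism, we get $[H \cap K] \lesssim [H]$ and $[H \cap K] \lesssim [K]$, so $[H\cap K]$ is a lower bound of $\{[H],[K]\}$ in $\mathrm{AutCl}(G)$. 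Hence $[H \cap K] \lesssim [H]\wedge'[K] = [\{e\}]$, which forces $|H \cap K| \le |\{e\}| = 1$, i.e. $H \cap K = \{e\}$.

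\textbf{The join part.} This is the part I expect to be the main obstacle, because $[G]$ is the \emph{greatest} element of $\mathrm{AutCl}(G)$, and "the join of $[H]$ and $[K]$ equals the top" is a much weaker-looking statement than what we want in $L(G)$ — it only says every common upper bound is forced to be $[G]$, which on its face does not pin down $H\vee K$. The resolution is again via the order-reflecting property: let $J = H \vee K = \langle H \cup K\rangle$ in $L(G)$. Then $H \subseteq J$ and $K \subseteq J$ give $[H] \lesssim [J]$ and $[K] \lesssim [J]$, so $[J]$ is an upper bound of $\{[H],[K]\}$ in $\mathrm{AutCl}(G)$; therefore $[H]\vee'[K] \lesssim [J]$. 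But $[H]\vee'[K] = [G]$, so $[G] \lesssim [J]$, which by the cardinality argument forces $|J| \ge |G|$, hence $J = G$. So $H \vee K = G$ in $L(G)$.

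\textbf{Remark on subtleties.} The one thing to be careful about — and worth stating explicitly in the write-up — is that $\wedge'$ and $\vee'$ are the lattice operations of $\mathrm{AutCl}(G)$ as a lattice (which is hypothesized), i.e. greatest lower bound and least upper bound with respect to $\lesssim$; this is what lets me sandwich $[H\cap K]$ below $[H]\wedge'[K]$ and $[H]\vee'[K]$ below $[H\vee K]$. No structural hypothesis beyond "lattice" and the two complementation equations is needed, and the whole argument runs on (i) $[\{e\}]$ and $[G]$ being the extremes, (ii) inclusion of subgroups implying $\lesssim$ of classes, and (iii) $\lesssim$ implying $\le$ on orders with equality characterizing equality of classes. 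I would present the meet and join parts as two short symmetric paragraphs after recording fact (iii) as the engine.
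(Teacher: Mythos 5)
Your proposal is correct and follows essentially the same route as the paper: observe that $[H\cap K]$ is a lower bound of $\{[H],[K]\}$ and $[H\vee K]$ an upper bound (via the identity automorphism and the definition of $\lesssim$), sandwich them against $[\{e\}]$ and $[G]$ respectively, and conclude by the order-reflecting cardinality argument. The paper's proof is the same argument, only stated slightly more tersely.
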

  \begin{proof} For a finite group $G$, assume that $(\text{AutCl}(G), \wedge',\vee')$ is a complemented lattice. Moreover, it is well known that $(L(G), \wedge,\vee)$ is a lattice, where $H\wedge K=H\cap K$ and $H\vee K=\bigl<H\cup K\bigr>$. As, $\text{AutCl}(G)$ is complemented, we have, for any $[H]\in \text{AutCl}(G)$, there exists $[K]\in \text{AutCl}(G)$ such that \begin{equation*}
    [H]\wedge'[K]=[\{e\}]\hspace{0.2cm} \text{and }\hspace{0.2cm} [H]\vee'[K]=[G].
\end{equation*}
Clearly, $H\wedge K\leq H,K$ in $L(G)$. Moreover, as $\text{id}_G(H\wedge K) \subseteq H, K $, where $\text{id}_G\in \text{Aut}(G)$ is the identity automorphism of $G$, so, by definition of $\text{AutCl}(G)$, $[H\wedge K]\lesssim [H],[K]$, which implies $[H\wedge K]\lesssim [H]\wedge'[K]=[\{e\}]$ and hence $H\wedge K=\{e\}$. Furthermore, $H,K\leq H\vee K$, this implies $[H],[K]\lesssim[H\vee K]$, as $\text{id}_G(H),\text{id}_G(K)\subseteq H\vee K$. Thus, $[G]=[H]\vee'[K]\lesssim[H\vee K]$, and consequently, $H\vee K=G$. \end{proof}
\begin{remark}
    The converse of Theorem 2.2 need not be true. For instance, $L(K_4)$ is complemented but $\text{AutCl}(K_4)$ is not.
    \end{remark}
The following result is of great interest.
\begin{theorem}
    \cite{conrad} For $n\ge 3$, \begin{equation*}
    \text{Aut}(D_n) \cong \Biggl\{\begin{pmatrix}
    a & b \\0 & 1
    \end{pmatrix}
    \hspace{0.2cm}|\hspace{0.2cm} a \in \mathbb{Z}_{n}^* , b\in \mathbb{Z}_n \Biggr\}.
\end{equation*}
\end{theorem}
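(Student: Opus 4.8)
The plan is to pin down an arbitrary $\phi \in \text{Aut}(D_n)$ by its action on the two generators $r,s$, show that the only constraints are that $\phi(r)$ generate the rotation subgroup and $\phi(s)$ be a reflection, and then recognise the resulting parametrisation --- together with its composition law --- as the group of affine transformations $x \mapsto ax+b$ ($a \in \mathbb{Z}_n^*$, $b\in\mathbb{Z}_n$) of $\mathbb{Z}_n$, which is exactly the displayed matrix group.

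First I would check that $R := \langle r\rangle$ is a characteristic subgroup of $D_n$ when $n \ge 3$. Every reflection satisfies $(r^i s)^2 = r^i (s r^i s^{-1}) s^2 = r^i r^{-i} = e$, so all reflections have order $2$, whereas $n \ge 3$; hence an element has order $n$ if and only if it lies in $R$, and since $r$ itself has order $n$, the elements of order $n$ generate $R$. Thus $R$ is invariant under every automorphism of $D_n$. Consequently $\phi(r)\in R$ has order $n$, so $\phi(r)=r^a$ with $\gcd(a,n)=1$, i.e. $a\in\mathbb{Z}_n^*$; and $\phi(s)\notin R$ (because $s\notin R=\phi(R)$), so $\phi(s)=r^b s$ for a unique $b\in\mathbb{Z}_n$. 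Since $r,s$ generate $D_n$, the assignment $\phi\mapsto(a,b)$ is a well-defined injection of $\text{Aut}(D_n)$ into $\mathbb{Z}_n^*\times\mathbb{Z}_n$.

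Next I would establish surjectivity of this assignment: given $(a,b)$ with $a\in\mathbb{Z}_n^*$ and $b\in\mathbb{Z}_n$, define $\phi_{a,b}$ on generators by $r\mapsto r^a$, $s\mapsto r^b s$ and verify that the defining relations of $D_n$ are preserved --- $(r^a)^n=e$, $(r^b s)^2=e$, and $(r^b s)(r^a)(r^b s)^{-1}=r^{-a}=(r^a)^{-1}$ --- so by the universal property of the presentation $\phi_{a,b}$ extends to an endomorphism of $D_n$. Because $\gcd(a,n)=1$, $r^a$ generates $R$, and then $r^b s$ exhausts the nontrivial coset, so $\phi_{a,b}$ is onto and hence (as $D_n$ is finite) an automorphism with parameters $(a,b)$. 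Finally I would compute the composition law: $(\phi_{a,b}\circ\phi_{c,d})(r)=r^{ac}$ and $(\phi_{a,b}\circ\phi_{c,d})(s)=r^{ad+b}s$, so $\phi_{a,b}\circ\phi_{c,d}=\phi_{ac,\,ad+b}$, which matches $\begin{pmatrix} a & b \\ 0 & 1 \end{pmatrix}\begin{pmatrix} c & d \\ 0 & 1 \end{pmatrix}=\begin{pmatrix} ac & ad+b \\ 0 & 1 \end{pmatrix}$; hence $\phi_{a,b}\mapsto\begin{pmatrix} a & b \\ 0 & 1 \end{pmatrix}$ is the required isomorphism onto the matrix group (which is visibly closed under products and inverses).

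The only genuinely delicate point is the characteristic-subgroup step, and with it the necessity of the hypothesis $n\ge 3$: for $n=2$ we have $D_2\cong\mathbb{Z}_2\times\mathbb{Z}_2$, in which every nonidentity element has order $2$, $R$ is not characteristic, and $\text{Aut}(D_2)\cong S_3$ is strictly larger than the matrix group (whose order would be $|\mathbb{Z}_2^*|\cdot|\mathbb{Z}_2|=2$). Everything past the identification $\phi(r)=r^a$, $\phi(s)=r^b s$ is routine verification.
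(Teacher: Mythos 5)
Your proof is correct and follows essentially the same route the paper relies on (the paper cites this result to Conrad rather than proving it): show $\langle r\rangle$ is characteristic, deduce $\phi(r)=r^a$ with $a\in\mathbb{Z}_n^*$ and $\phi(s)=r^bs$, verify every such pair arises, and match the composition law $\phi_{a,b}\circ\phi_{c,d}=\phi_{ac,\,ad+b}$ with the matrix product. One cosmetic point: your clause ``an element has order $n$ if and only if it lies in $R$'' is false in the ``if'' direction (e.g.\ $e\in R$), but the statement you actually use --- that every element of order $n$ lies in $R$ and that these elements generate $R$ --- is correct, so the argument is unaffected.
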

In the next Theorem, we exploit the proof of Theorem 2.3  which is based on the fact that each automorphism $\varphi$ of $D_n$ is determined by the image of rotation $r$ and reflection $s$. More precisely, \begin{equation*}
    \varphi(r)=r^a \hspace{0.1cm}\text{and}\hspace{0.1cm} \varphi(s)=r^bs,\hspace{0.1cm} \text{where} \hspace{0.1cm} a\in \mathbb{Z}_n^*, b \in \mathbb{Z}_n.
    \end{equation*}
Note that from Lemma 2.1, AutCl($D_n$) is a poset. The following result establish that $\text{AutCl}(G)$ is a lattice, if $G=D_n$.
\begin{theorem}
    The poset $\text{AutCl}(D_n)$ is a lattice for all positive integer $n$. 
    \end{theorem}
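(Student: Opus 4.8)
The plan is to determine $\text{AutCl}(D_n)$ explicitly and to recognise it as a product of two distributive lattices. Assume first $n\ge 3$. By Theorem 1.1 the subgroups of $D_n$ are the Type (1) subgroups $\langle r^{d}\rangle$ and the Type (2) subgroups $\langle r^{d},r^{i}s\rangle$ (with $d\mid n$ and $0\le i\le d-1$), and by Theorem 2.3 every $\varphi\in\text{Aut}(D_n)$ has the form $\varphi(r)=r^{a}$, $\varphi(s)=r^{b}s$ with $a\in\mathbb{Z}_n^{*}$, $b\in\mathbb{Z}_n$. Since $d\mid n$ and $\gcd(a,n)=1$, we get $\varphi(\langle r^{d}\rangle)=\langle r^{ad}\rangle=\langle r^{d}\rangle$, so each Type (1) subgroup is fixed by every automorphism and forms a singleton class; this gives one class $[\langle r^{d}\rangle]$ for each $d\mid n$. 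For Type (2), $\varphi(\langle r^{d},r^{i}s\rangle)=\langle r^{d},r^{ai+b}s\rangle$, and taking $a=1$, $b=j-i$ shows that $\langle r^{d},r^{i}s\rangle$ and $\langle r^{d},r^{j}s\rangle$ are automorphic; since automorphisms preserve orders and map reflections to reflections, the Type (2) subgroups sharing a given $d$ make up exactly one class, distinct from every Type (1) class. Hence $\text{AutCl}(D_n)=\{[\langle r^{d}\rangle]:d\mid n\}\cup\{[\langle r^{d},s\rangle]:d\mid n\}$.

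Next I would describe the order $\lesssim$. By Remark 2, for the chosen representatives $H,K$ one has $[H]\lesssim[K]$ iff $f(H)\subseteq K$ for some $f\in\text{Aut}(D_n)$. The rotations contained in $\langle r^{d},r^{i}s\rangle$ form $\langle r^{d}\rangle$, and $\langle r^{d_1}\rangle\subseteq\langle r^{d_2}\rangle$ iff $d_2\mid d_1$. Combining these facts with the description above, one checks that $[\langle r^{d_1}\rangle]\lesssim[\langle r^{d_2}\rangle]$, $[\langle r^{d_1},s\rangle]\lesssim[\langle r^{d_2},s\rangle]$ and $[\langle r^{d_1}\rangle]\lesssim[\langle r^{d_2},s\rangle]$ each hold iff $d_2\mid d_1$, while $[\langle r^{d_1},s\rangle]\lesssim[\langle r^{d_2}\rangle]$ never holds, because a Type (2) subgroup contains reflections, every $\varphi$ maps reflections to reflections, and $\langle r^{d_2}\rangle$ contains none.

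Now define $\Psi\colon\text{AutCl}(D_n)\to T(n)\times C_2$ by $\Psi([\langle r^{d}\rangle])=(n/d,\,0)$ and $\Psi([\langle r^{d},s\rangle])=(n/d,\,1)$, where $T(n)$ carries the divisibility order and $C_2=\{0<1\}$. By the first step $\Psi$ is a bijection, and since $d_2\mid d_1\iff (n/d_1)\mid(n/d_2)$, the relations of the second step translate exactly into the componentwise order on $T(n)\times C_2$, so $\Psi$ is an order isomorphism. Thus $\text{AutCl}(D_n)\cong T(n)\times C_2$; as $T(n)$ is a distributive lattice and $C_2$ is a chain, their product (with componentwise meet and join) is a distributive lattice, so in particular $\text{AutCl}(D_n)$ is a lattice. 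The two remaining values are immediate: $D_1\cong\mathbb{Z}_2$ gives $\text{AutCl}(D_1)\cong C_2$, and $D_2\cong\mathbb{Z}_2\times\mathbb{Z}_2$ gives $\text{AutCl}(D_2)\cong C_3$ (cf.\ the Examples), both lattices.

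The conceptual core — the isomorphism $\text{AutCl}(D_n)\cong T(n)\times C_2$ — is short once the pieces are assembled; the real work is the bookkeeping in the first two steps. The main obstacle I anticipate is getting the automorphic classes exactly right, in particular exploiting the freedom in the parameter $b$ to see that all Type (2) subgroups sharing a given $d$ collapse into a single class, and then carefully verifying the four cases of the order relation — the delicate one being that no class of dihedral-type subgroups can lie below a class of cyclic-type subgroups.
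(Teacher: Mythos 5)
Your argument is correct, and it takes a genuinely different route from the paper. The paper proves Theorem 2.4 by a four-case direct verification: for each pair of types of $H_1,H_2$ it exhibits candidates $K_1,K_2$ and checks by hand that $[K_1]$ is the least upper bound and $[K_2]$ the greatest lower bound, and it then needs two further theorems (excluding $N_5$ and $M_3$ sublattices) to obtain distributivity. You instead compute the quotient poset completely: using Theorem 2.3 you show every Type (1) class is a singleton, every Type (2) class collapses over the parameter $i$, and the order relation is governed by divisibility of $d$ together with the one-way ``reflections map to reflections'' obstruction, yielding $\text{AutCl}(D_n)\cong T(n)\times C_2$ for $n\ge 3$. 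This buys more for less: the lattice property, distributivity (so the content of Theorems 2.7, 2.8 and Corollary 2.8.1), and the explicit identifications in Theorem 2.5 (e.g.\ $M_2\cong T(p)\times C_2$ for odd primes, $T(p_1^{\alpha}p_2)\cong T(p^{\alpha})\times C_2$, and the Boolean lattice for $D_{p_1p_2}$) all fall out of the single isomorphism; the $n=1,2$ cases are handled separately exactly as they must be, since Theorem 2.3 fails there. The only place you compress is the sentence ``one checks that'' for the four order-relation equivalences, but the ingredients you cite (the rotation part of $\langle r^{d},r^{i}s\rangle$ is $\langle r^{d}\rangle$, containment of subgroups of $\langle r\rangle$ is divisibility, and reflections go to reflections) do carry out that check, so this is a presentational gap rather than a mathematical one.
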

\begin{proof}
     The result holds trivially for $n=1,2$. For $n\ge 3$, let $n=p_1^{t_1}p_2^{t_2}\dots p_k^{t_k}$, where $p_i$'s are distinct primes and $t_i > 0$, for $1\le i \le k$. From the Remark 1, every subgroup of $D_n$ is either of type (1) or of type (2). Let $[H_1]$ and $[H_2]$ be two elements of AutCl$(D_n)$.  To prove AutCl($D_n$) is a lattice, it is sufficient to show that the meet and join of $[H_1]$ and $[H_2]$ exists.\par Consider the following cases: \vspace{0.2cm} \\
\textbf{Case 1:} If both $H_1$ and $H_2$ are of type (1), then $H_1=\bigl<r^{p_1^{u_1}p_2^{u_2}\dots p_k^{u_k}}\bigr>$ and $H_2=\bigl<r^{p_1^{v_1}p_2^{v_2}\dots p_k^{v_k}}\bigr>$, with $0\le u_i, v_i\le t_i$, $1\le i \le k$. We show that,
\begin{equation*}
    [H_1]\vee' [H_2]=[K_1] \hspace{0.2cm} \text{and} \hspace{0.2cm} [H_1]\wedge' [H_2]=[K_2],
\end{equation*}
where 
\begin{equation*}
 K_1=   \bigl<r^{p_1^{\text{min}\{u_1,v_1\}}p_2^{\text{min}\{u_2,v_2\}}\dots p_k^{\text{min}\{u_k,v_k\}}}\bigr> \hspace{0.2cm} \text{and }\hspace{0.2cm} K_2=\bigl<r^{p_1^{\text{max}\{u_1,v_1\}}p_2^{\text{max}\{u_2,v_2\}}\dots p_k^{\text{max}\{u_k,v_k\}}}\bigr>. 
\end{equation*}
Clearly, $[H_1],[H_2]\lesssim[K_1]$ as $H_1, H_2\le K_1$. So, $[K_1]$ is an upper bound of $\{[H_1],[H_2]\}$. In order to show that $[K_1]$ is the least upper bound of $\{[H_1],[H_2]\}$, assume that $[\bar H]$ be an upper bound of $\{[H_1],[H_2]\}$, then there exist $\phi_1,\phi_2 \in \text{Aut}(D_n)$ with $\phi_1(H_1), \phi_2(H_2)\subseteq \bar H$. Moreover, by Theorem 2.3, $[H_1],[H_2]$ are singletons as $H_1, H_2$ are of type (1). Therefore, $\phi_1(H_1)=H_1, \phi_2(H_2)=H_2$ and consequently, $K_1=H_1 \vee H_2 \subseteq \bar H$, which implies $[K_1]\lesssim [\bar H]$.  \par
Clearly, $[K_2]\lesssim [H_1], [H_2]$ as $K_2\le H_1, H_2$ and so, $[K_2]$ is a lower bound of $\{[H_1],[H_2]\}$. To show that $[K_2]$ is the greatest lower bound of $\{[H_1],[H_2]\}$, we assume that $[\widehat H]$ be a lower bound of  $\{[H_1],[H_2]\}$, then there are maps $\phi_1',\phi_2'\in \text{Aut}(D_n)$ with $\phi_1'(\widehat H)\subseteq H_1$ and $\phi_2' (\widehat H)\subseteq H_2$. By Remark 1, $\widehat H$ is of type (1), thus, by Theorem 2.3, $[\widehat H]$ is singleton and $\phi_1'(\widehat H),\phi_2'(\widehat H)=\widehat H$, which implies $\widehat H \le H_1\wedge H_2 =K_2$ and consequently, $[\widehat H]\lesssim [K_2]$. \vspace{0.2cm}\\
\textbf{Case 2:} If $H_1$ is of type (1) and $H_2$ is of type (2) generated by reflections only, then $H_1=$ $\bigl<r^{p_1^{u_1}p_2^{u_2}\dots p_k^{u_k}}\bigr>$ and $H_2=\bigl<r^js\bigr>$, $0\le u_i\le t_i$, $1\le i\le k$ and $0\le j \le n-1$. Note that $[\bigl<r^js\bigr>]=[\bigl<s\bigr>]$, so without the loss of generality, we can choose $H_2=\bigl<s\bigr>$. We show that,
\begin{equation*}
    [H_1]\vee' [H_2]=[K_1] \hspace{0.2cm} \text{and} \hspace{0.2cm} [H_1]\wedge' [H_2]=[K_2],
\end{equation*}
where  
\begin{equation*}
 K_1=   \bigl<r^{p_1^{u_1}p_2^{u_2}\dots p_k^{u_k}},s\bigr> \hspace{0.2cm} \text{and }\hspace{0.2cm} K_2=\{e\}. 
\end{equation*}
Clearly, $[H_1],[H_2]\lesssim [K_1]$ as $H_1,H_2\le K_1$. In order to show that $[K_1]$ is the least upper bound of $\{[H_1],[H_2]\}$, we assume that $[\bar H]$ be an upper bound of $\{[H_1],[H_2]\}$, then there exist $\phi_1,\phi_2 \in \text{Aut}(D_n)$ with $\phi_1(H_1), \phi_2(H_2)\subseteq \bar H$. By Theorem 2.3, $[H_1]$ is singleton, so, $\phi_1(H_1)=H_1\le \bar H$ and $\phi_2(H_2)=$ $ \bigl<r^js\bigr>\le \bar H$, for some $j$, so, $\phi_1(H_1)\vee \phi_2(H_2)\in [K_1]$ and consequently, $[K_1]\lesssim [\bar H]$. \par
Certainly, $[K_2]\lesssim [H_1], [H_2]$ as $K_2=\{e\}$. So $[K_2]$ is a lower bound of $\{[H_1],[H_2]\}$. Let $[\widehat H]$ be a lower bound of $\{[H_1],[H_2]\}$, then $\widehat H$ consists of rotations only, as $[\widehat H]\lesssim [H_1]$, also, $\widehat H$ consists of reflections only, as $[\widehat H]\lesssim [H_2]$. Therefore, $\widehat H=\{e\}$ and hence $[\widehat H]\lesssim [K_2]$. \vspace{0.2cm}\\
\textbf{Case 3}: If $H_1$ is of type (1) and $H_2$ is of type (2), then $H_1=\bigl<r^{p_1^{u_1}p_2^{u_2}\dots p_k^{u_k}}\bigr>$ and $H_2=$$ \bigl<r^{p_1^{v_1}p_2^{v_2}\dots p_k^{v_k}},r^js\bigr>$, $0\le u_i,v_i\le t_i$, $1\le i\le k$ and $0\le j\le p_1^{v_1}p_2^{v_2}\dots p_k^{v_k}-1$. Note that, $ [\bigl<r^{p_1^{v_1}p_2^{v_2}\dots p_k^{v_k}},r^js\bigr>]=[\bigl<r^{p_1^{v_1}p_2^{v_2}\dots p_k^{v_k}},s\bigr>]$, so, without the loss of generality, we can choose $H_2=\bigl<r^{p_1^{v_1}p_2^{v_2}\dots p_k^{v_k}},s\bigr>$. We show that,
\begin{equation*}
    [H_1]\vee' [H_2]=[K_1] \hspace{0.2cm} \text{and} \hspace{0.2cm} [H_1]\wedge' [H_2]=[K_2],
\end{equation*}
where
\begin{equation*}
 K_1=  \bigl <r^{p_1^{\text{min}\{u_1,v_1\}}p_2^{\text{min}\{u_2,v_2\}}\dots p_k^{\text{min}\{u_k,v_k\}}},s\bigr> \hspace{0.2cm} \text{and }\hspace{0.2cm} K_2=\bigl<r^{p_1^{\text{max}\{u_1,v_1\}}p_2^{\text{max}\{u_2,v_2\}}\dots p_k^{\text{max}\{u_k,v_k\}}}\bigr>.
\end{equation*}
Clearly, $[H_1],[H_2]\lesssim [K_1]$ as $H_1,H_2\le K_1$. Let $[\bar H]$ be an upper bound of $\{[H_1],[H_2]\}$, then there are maps $\phi_1,\phi_2\in \text{Aut}(G)$, with $\phi_1(H_1), \phi_2(H_2)\subseteq \bar H$. As $H_1$ is of type (1), by Theorem 2.3, the class $[H_1]$ is singleton, so, $\phi_1(H_1)=H_1\le \bar H$ and also $\phi_2(\bigl<r^{p_1^{v_1}\dots p_k^{v_k}}\bigr>)=\bigl<r^{p_1^{v_1}\dots p_k^{v_k}}\bigr>\le \bar H$. Clearly, $\phi_2(s)\in \bar H$ is a reflection, so, $\phi_1(H_1)\vee\phi_2(H_2)=H_1\vee \bigl<r^{p_1^{v_1}\dots p_k^{v_k}},\phi_2(s)\bigr>\le \bar H$ and hence, $[\phi_1(H_1)\vee \phi_2(H_2)]= [K_1]$, which implies $[K_1]\lesssim [\bar H]$ and consequently, $[K_1]$ is the least upper bound of $\{[H_1],[H_2]\}$. \par
Certainly, $[K_2]\lesssim [H_1], [H_2]$ as $K_2\le H_1,H_2$. So, $[K_2]$ is a lower bound of $\{[H_1],[H_2]\}$. Let $[\widehat H]$ be a lower bound of $\{[H_1],[H_2]\}$, then $\widehat H $ contains rotations only as $[\widehat H]\lesssim [H_1]$, so, $[\widehat H]$ is singleton and consequently, $\widehat H \le H_1, H_2$, which implies $\widehat H \le H_1\wedge H_2=K_2$. So, $[\widehat H]\lesssim [K_2]$ and hence, $[K_2]$ is the greatest lower bound of $\{[H_1],[H_2]\}$.
\vspace{0.2cm} \\ 
\textbf{Case 4:} If both $H_1$ and $H_2$ are of type (2), then without the loss of generality, assume that $H_1=\bigl<r^{p_1^{u_1}p_2^{u_2},\dots p_k^{u_k}},s\bigr>$ and $H_2=\bigl<r^{p_1^{v_1}p_2^{v_2}\dots p_k^{v_k}},s\bigr>$, $0\le u_i,v_i\le t_i$,$1\le i\le k$. Then, we show that,
\begin{equation*}
    [H_1]\vee' [H_2]=[K_1] \hspace{0.2cm} \text{and} \hspace{0.2cm} [H_1]\wedge' [H_2]=[K_2],
\end{equation*}
where 
\begin{equation*}
 K_1=   \bigl<r^{p_1^{\text{min}\{u_1,v_1\}}p_2^{\text{min}\{u_2,v_2\}}\dots p_k^{\text{min}\{u_k,v_k\}}},s\bigr> \hspace{0.2cm} \text{and }\hspace{0.2cm} K_2=\bigl<r^{p_1^{\text{max}\{u_1,v_1\}}p_2^{\text{max}\{u_2,v_2\}}\dots p_k^{\text{max}\{u_k,v_k\}}},s\bigr>. 
\end{equation*}
Clearly, $[H_1],[H_2]\lesssim [K_1]$ as $H_1,H_2\le K_1$. So, $[K_1]$ is an upper bound of $\{[H_1],[H_2]\}$. In order to show that $[K_1]$ is the least upper bound of $\{[H_1],[H_2]\}$, we assume that $[\bar H]$ be an upper bound of $\{[H_1],[H_2]\}$, then $\bigl<r^{p_1^{u_1}\dots p_k^{u_k}}\bigr>,\bigl<r^{p_1^{v_1}\dots p_k^{v_k}}\bigr>\le \bar H$ and also $\bar H$ contains reflections as automorphisms map reflection $s$ to a reflection, say, $r^js$, for some $j$. Therefore, $[K_1]\lesssim [\bar H]$ and hence, $[K_1]$ is the least upper bound of $\{[H_1],[H_2]\}$.\par
Clearly, $[K_2]\lesssim [H_1],[H_2]$ as $K_2\le H_1,H_2$. So, $[K_2]$ is a lower bound of $\{[H_1],[H_2]\}$. To show that $[K_2]$ is the greatest lower bound of $\{[H_1],[H_2]\}$, we assume that $[\widehat H]$ be any lower bound of  $\{[H_1],[H_2]\}$, then there is an automorphic image of $\widehat H$ in $H_1$ and $H_2$. This means that, there is an automorphic image of $\widehat H$ in $K_2$. Thus, by the definition of AutCl($D_n$), $[\widehat H]\lesssim [K_2]$ and consequently, $[K_2]$ is the greatest lower bound of $\{[H_1],[H_2]\}$. 
\end{proof}
In the following theorem, we have shown that $\text{AutCl}(D_n)$ is a lattice of known type, for some particular values of $n$.
\begin{theorem}
For a prime $p$, \\
\begin{center}
$\text{AutCl}(D_{p^\alpha})\cong$ $\begin{cases}
    C_3, & \text{if} \hspace{0.2cm} p=2 \hspace{0.2cm} \text{and} \hspace{0.2cm}\alpha =1, \\
    M_2, & \text{if} \hspace{0.2cm} p \ne 2 \hspace{0.2cm}\text{and} \hspace{0.2cm} \alpha =1,\\
    T(p_1^\alpha p_2), & \text{if} \hspace{0.2cm} \alpha \ge 2, \hspace{0.2cm} \text{where $p_1,p_2$ are any distinct primes.}
\end{cases}$
\end{center}
  Furthermore, $\text{AutCl}(D_{p^\alpha})$ contains $2(\alpha+1)$ elements, whenever $\alpha$ is a positive integer $\ge 2$. Moreover, for distinct primes $p_1$ and $p_2$, $\text{AutCl}(D_{p_1p_2})$ is isomorphic to the lattice of power set of 3 elements.
\end{theorem}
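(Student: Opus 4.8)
The plan is to combine the subgroup classification of Theorem 1.1 with the explicit description of $\text{Aut}(D_n)$ from Theorem 2.3 (equivalently Theorem 2.4), handling the degenerate case $D_2$ separately since Theorem 2.3 requires $n\ge 3$. For $D_2\cong \mathbb{Z}_2\times\mathbb{Z}_2$ the subgroups are $\{e\}$, three subgroups of order $2$, and the whole group, and $\text{Aut}(D_2)\cong S_3$ permutes the three order-$2$ subgroups transitively; hence $\text{AutCl}(D_2)$ has three elements $[\{e\}]\lesssim[\langle s\rangle]\lesssim[D_2]$, i.e. $\text{AutCl}(D_2)\cong C_3$. So assume henceforth $n=p^{\alpha}$ with $n\ge 3$, and recall from Theorem 2.3 that every $\varphi\in\text{Aut}(D_n)$ satisfies $\varphi(r)=r^{a}$, $\varphi(s)=r^{b}s$ with $a\in\mathbb{Z}_n^{*}$, $b\in\mathbb{Z}_n$.

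First I would enumerate the automorphic classes. Since the divisors of $p^{\alpha}$ are $1,p,\dots,p^{\alpha}$, the Type (1) subgroups are $H_j:=\langle r^{p^{j}}\rangle$ for $0\le j\le\alpha$. As $\langle r\rangle$ is the unique cyclic subgroup of $D_n$ of order $n$ (all reflections have order $2<n$), it is characteristic, and each $H_j$ is the unique subgroup of $\langle r\rangle$ of order $p^{\alpha-j}$, hence characteristic; thus each $[H_j]$ is a singleton and the $[H_j]$ are pairwise distinct by order. The Type (2) subgroups are $\langle r^{p^{j}},r^{i}s\rangle$ with $0\le j\le\alpha$, $0\le i\le p^{j}-1$; for fixed $j$, applying $\varphi$ gives $\varphi(\langle r^{p^{j}},s\rangle)=\langle r^{ap^{j}},r^{b}s\rangle=\langle r^{p^{j}},r^{b}s\rangle$ because $\gcd(ap^{j},p^{\alpha})=p^{j}$, and $b$ may be taken in any residue class modulo $p^{j}$, so all these subgroups form a single class $K_j:=[\langle r^{p^{j}},s\rangle]$; the $K_j$ are again pairwise distinct by order. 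This gives exactly the $2(\alpha+1)$ classes $[H_0],\dots,[H_\alpha],K_0,\dots,K_\alpha$, which settles the cardinality claim.

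Next I would pin down the partial order. Using that automorphisms send rotations to rotations and reflections to reflections, together with the containments in Theorem 1.1, one checks $[H_i]\lesssim[H_j]\iff [H_i]\lesssim K_j\iff p^{j}\mid p^{i}\iff j\le i$, and $K_i\lesssim K_j\iff j\le i$, while $K_i\not\lesssim[H_j]$ for all $i,j$, because every Type (2) subgroup contains a reflection whose image under any automorphism is again a reflection and so cannot lie in the rotation group $H_j$. For $\alpha=1$ (and $p\ne 2$) this yields the four-element poset with bottom $[H_1]=[\{e\}]$, top $K_0=[D_p]$, and two incomparable elements $[H_0],K_1$ in between, i.e. $M_2$. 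For $\alpha\ge 2$, the assignment $[H_j]\mapsto p_1^{\alpha-j}$ and $K_j\mapsto p_1^{\alpha-j}p_2$ (for any distinct primes $p_1,p_2$) is a bijection onto the divisor set of $p_1^{\alpha}p_2$ under which the relations just listed correspond precisely to divisibility, so $\text{AutCl}(D_{p^{\alpha}})\cong T(p_1^{\alpha}p_2)$.

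Finally, for $\text{AutCl}(D_{p_1p_2})$ the same machinery applies directly: the four divisors of $p_1p_2$ give four singleton Type (1) classes $[\langle r^{d}\rangle]$ and four Type (2) classes $[\langle r^{d},s\rangle]$, for $8$ elements in all, and the order satisfies $[\langle r^{d}\rangle]\lesssim[\langle r^{d'}\rangle]$, $[\langle r^{d}\rangle]\lesssim[\langle r^{d'},s\rangle]$, and $[\langle r^{d},s\rangle]\lesssim[\langle r^{d'},s\rangle]$ all equivalent to $d'\mid d$, with $[\langle r^{d},s\rangle]$ never below a Type (1) class. Writing $d=p_1^{u_1}p_2^{u_2}$ with $u_i\in\{0,1\}$, the map $[\langle r^{d}\rangle]\mapsto\{\,i:u_i=0\,\}$ and $[\langle r^{d},s\rangle]\mapsto\{\,i:u_i=0\,\}\cup\{3\}$ is then an order isomorphism onto the power set of $\{1,2,3\}$ ordered by inclusion. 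I expect the only points needing genuine care to be the two ``no accidental identifications/comparisons'' claims — that Type (2) subgroups sharing a common $d$ are all automorphic, but no Type (2) subgroup can be carried inside a Type (1) subgroup — since these are exactly what break the self-duality of $L(D_n)$ and force the stated lattices; the remainder is routine bookkeeping with divisors.
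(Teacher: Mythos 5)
Your proposal is correct and follows essentially the same route as the paper: use the explicit form $\varphi(r)=r^{a}$, $\varphi(s)=r^{b}s$ of automorphisms to show that Type (1) classes are singletons and that all Type (2) subgroups with a fixed rotation part $\langle r^{p^{j}}\rangle$ collapse to one class, count the resulting $2(\alpha+1)$ classes, and exhibit the explicit order isomorphisms onto $C_3$, $M_2$, $T(p_1^{\alpha}p_2)$ and the power-set lattice. You are somewhat more systematic than the paper in writing out the full order relation (in particular that no Type (2) class lies below a Type (1) class, and the characteristic-subgroup justification for singletons) before defining the isomorphisms, but the underlying argument is the same.
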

\begin{proof}
     For $p=2$ and $\alpha=1$, we have, $\bigl<r\bigr>,\bigl<s\bigr>$ and $\bigl<rs\bigr>$ belongs to the same class, as they are the images of $\bigl<r\bigr>$ under the automorphisms $\phi_1,\phi_2$ and $\phi_3$, respectively, where $\phi_1(r)=r$ and $\phi_1(s)=s$, $\phi_2(r)=s$ and $\phi_2(s)=r$, $\phi_3(r)=rs$ and $\phi_3(s)=s$. So, distinct elements of $\text{AutCl}(D_2)$ are $[\bigl<e\bigr>],[\bigl<r\bigr>],[D_2]$ with $[\bigl<e\bigr>]\lesssim[\bigl<r\bigr>]\lesssim[D_2]$ and hence $\text{AutCl}(D_2)\cong C_3$.\par
For odd prime $p$, the distinct elements of $\text{AutCl}(D_p)$ are $[\bigl<e\bigr>],[\bigl<r\bigr>],[\bigl<s\bigr>],[D_p]$, as the order of subgroups $\bigl<e\bigr>,\bigl<r\bigr>,\bigl<s\bigr>,D_p$ are all distinct. Furthermore, $[\bigl<e\bigr>]\lesssim [\bigl<r\bigr>]\lesssim [D_p]$ and $[\bigl<e\bigr>]\lesssim [\bigl<s\bigr>]\lesssim[D_p]$ and, $[\bigl<r\bigr>]$ and $[\bigl<s\bigr>]$ are incomparable, as under automorphism the subgroup generated by rotation maps to subgroup generated by rotation and same for reflections and consequently, $\text{AutCl}(D_p)\cong M_2$. \par
In $\text{AutCl}(D_{p^\alpha})$, there are $\alpha+1$ distinct classes containing subgroups of type (1), viz., $[\bigl<e\bigr>],$  $ [\bigl<r\bigr>],[\bigl<r^p\bigr>],[\bigl<r^{p^2}\bigr>],\dots,[\bigl<r^{p^{\alpha-1}}\bigr>]$ as order of each of class representatives are distinct. Furthermore, for the subgroup $\bigl<r^is\bigr>$, the map, $r\to r, s\to r^is$ is an automorphism that maps $\bigl<s\bigr>$ to $\bigl<r^is\bigr>$. So, all subgroups of $D_{p^\alpha}$ generated by reflections are contained in the class $[\bigl<s\bigr>]$. Also, for any class $[\bigl<r^{p^i}\bigr>]$, $1\le i\le \alpha-1$, there exists a class $[\bigl<r^{p^i},s\bigr>   ]$ containing subgroup of type (2). So, there are $\alpha-1$ distinct classes of the form $[\bigl<r^{p^i},s\bigr>]$ and lastly there is a class $[D_{p^\alpha}]$. Therefore, the total number of elements of AutCl($D_{p^\alpha}$) are $(\alpha+1)+1+(\alpha-1)+1=2(\alpha+1)$. \par For $\alpha\ge 2$, consider the map $\varphi: \text{AutCl}(D_{p^{\alpha}})\to T(p_1^\alpha p_2)$ given by $\varphi([\bigl<r^{p^{\alpha-j}}\bigr>])=p_1^j$ and $\varphi([\bigl<r^{p^{\alpha-j}},s\bigr>])=p_1^jp_2$. The map $\varphi$ is a lattice isomorphism between $\text{AutCl}(D_{p^{\alpha}})$ and $T(p_1^\alpha p_2)$. Thus, $\text{AutCl}(D_{p^{\alpha}})\cong T(p_1^\alpha p_2)$. \par
Now, in $\text{Aut}(D_{p_1p_2})$, it is clear that the elements $[\bigl<e\bigr>],[\bigl<r^{p_1}\bigr>],[\bigl<r^{p_2}\bigr>],$  $ [\bigl<r\bigr>],[\bigl<s\bigr>],$ $[\bigl<r^{p_1},s\bigr>],[\bigl<r^{p_2},s\bigr>],[D_{p_1p_2}]$ are all distinct as the order of their representatives are distinct. Let $\wp(X)$ be the power set of $X=\{1,2,3\}$. Then the map $\varphi:\text{AutCl}(D_{p_1p_2})\to \wp(X)$ given by $\varphi([\bigl<e\bigr>])=\{\}$ the empty set, $\varphi([\bigl<r^{p_1}\bigr>])=\{1\}$, $\varphi([\bigl<r^{p_2}\bigr>])=\{2\}$, $\varphi([\bigl<s\bigr>])=\{3\}$, $\varphi([\bigl<r^{p_1},s\bigr>])=\{1,3\}$, $\varphi([\bigl<r^{p_2},s\bigr>])=\{2,3\}$, $\varphi([\bigl<r\bigr>])=\{1,2\}$, $\varphi([D_{p_1p_2}])=\{1,2,3\}$ is a lattice isomorphism and consequently, $\text{AutCl}(D_{p_1p_2})\cong \wp(X)$. \end{proof}
In order to show that AutCl$(D_n)$ is a distributive lattice, we essentially use the following characterization due to Birkhoff \cite{Schmidt+1994}. 
\begin{theorem}
     \cite{Schmidt+1994} A lattice is distributive if and only if it does not contain a sublattice isomorphic to a pentagon $(N_5)$ or a diamond $(M_3)$. \end{theorem}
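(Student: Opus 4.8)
The plan is to prove both implications, the forward direction (a distributive lattice forbids both $N_5$ and $M_3$) being routine and the converse carrying the weight of the argument. I would route the converse through the intermediate notion of \emph{modularity}: a lattice $L$ is modular if $a\le c$ implies $a\vee(b\wedge c)=(a\vee b)\wedge c$, noting that the inequality $a\vee(b\wedge c)\le(a\vee b)\wedge c$ holds in every lattice whenever $a\le c$, so modularity merely asserts that this inequality is an equality. The whole argument then splits into three parts: (i) the easy forward direction; (ii) absence of an $N_5$ sublattice forces modularity; and (iii) a modular lattice with no $M_3$ sublattice is distributive. Since every distributive lattice is modular, parts (ii) and (iii) together yield the contrapositive of the converse.

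For the easy direction I would first observe that every sublattice of a distributive lattice is distributive, because the distributive identity is a universally quantified equation and is therefore inherited by any subset closed under $\wedge$ and $\vee$. It then suffices to check by direct computation that neither $N_5$ nor $M_3$ is itself distributive. In $M_3$ with bottom $0$, top $1$, and pairwise-incomparable atoms $a,b,c$, one has $a\wedge(b\vee c)=a\wedge 1=a$ while $(a\wedge b)\vee(a\wedge c)=0$; in $N_5$ with chain $0<x<y<1$ and a side element $z$, one has $y\wedge(x\vee z)=y$ while $(y\wedge x)\vee(y\wedge z)=x$. As $a\ne 0$ and $x\ne y$, distributivity fails in each, so neither can appear as a sublattice of a distributive lattice.

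For step (ii) I argue the contrapositive: if $L$ is not modular, there exist $a\le c$ with $a\vee(b\wedge c)<(a\vee b)\wedge c$. Writing $u=a\vee(b\wedge c)$ and $v=(a\vee b)\wedge c$, I would show the five elements $b\wedge c,\ u,\ v,\ a\vee b,\ b$ form a pentagon. A short computation using absorption and $u\le c$, $a\le v$ gives $b\wedge u=b\wedge v=b\wedge c$ and $b\vee u=b\vee v=a\vee b$, while the strict inequality $u<v$ is exactly what is needed to check that $b$ is incomparable to both $u$ and $v$ and that the chain $b\wedge c<u<v<a\vee b$ is strict. This realizes $N_5$ as a sublattice, so the absence of $N_5$ forces modularity.

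Step (iii) is the heart of the proof and the step I expect to be the main obstacle. Assume $L$ is modular but not distributive, so there are $a,b,c$ with $(a\wedge b)\vee(a\wedge c)<a\wedge(b\vee c)$. I would pass to the canonical symmetric elements
\begin{equation*}
u=(a\wedge b)\vee(b\wedge c)\vee(c\wedge a),\qquad v=(a\vee b)\wedge(b\vee c)\wedge(c\vee a),
\end{equation*}
which satisfy $u\le v$ automatically, and set $a_1=(a\wedge v)\vee u$ together with the cyclic analogues $b_1,c_1$. Repeated use of the modular law then yields $u\le a_1,b_1,c_1\le v$ along with the median identities $a_1\wedge b_1=b_1\wedge c_1=c_1\wedge a_1=u$ and $a_1\vee b_1=b_1\vee c_1=c_1\vee a_1=v$, so that $\{u,a_1,b_1,c_1,v\}$ is closed under $\wedge$ and $\vee$ and is an isomorphic copy of $M_3$, provided $u<v$ and the three middle elements are distinct. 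The delicate point, where the failure of distributivity must finally be invoked, is precisely to guarantee $u\ne v$ and to deduce from it that $a_1,b_1,c_1$ are three genuinely distinct elements strictly between $u$ and $v$; verifying the six median identities is a mechanical but lengthy modular computation, whereas extracting this strictness from non-distributivity is the conceptual crux. Combining the three parts, a lattice with no $N_5$ is modular and a modular lattice with no $M_3$ is distributive, which is the contrapositive of the converse and completes the proof.
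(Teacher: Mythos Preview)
The paper does not supply its own proof of this statement: Theorem~2.6 is quoted from \cite{Schmidt+1994} as a known result (Birkhoff's characterization) and is used as a black box in the subsequent arguments about $\text{AutCl}(D_n)$ and $\text{AutCl}(Q_{4m})$. So there is no in-paper proof to compare your proposal against.

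That said, your outline is the standard textbook proof (essentially Dedekind--Birkhoff): the forward direction is immediate since $N_5$ and $M_3$ are non-distributive sublattices; for the converse you factor through modularity, first showing that a non-modular lattice contains a copy of $N_5$ via the five elements $b\wedge c,\ u,\ v,\ a\vee b,\ b$ built from a modularity failure, and then showing that a modular non-distributive lattice contains $M_3$ via the symmetric median construction $u=(a\wedge b)\vee(b\wedge c)\vee(c\wedge a)$, $v=(a\vee b)\wedge(b\vee c)\wedge(c\vee a)$, and $a_1=(a\wedge v)\vee u$, etc. This is correct and is exactly the argument one finds in the references the paper cites (\cite{birkhoff1940lattice}, \cite{Gratzer}, \cite{davey2002introduction}, \cite{Schmidt+1994}). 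The one place you flag as delicate---deducing $u<v$ and the pairwise distinctness of $a_1,b_1,c_1$ from the failure of distributivity---is indeed where the work lies, but it goes through: from $(a\wedge b)\vee(a\wedge c)<a\wedge(b\vee c)$ one gets $u<v$, and then if any two of $a_1,b_1,c_1$ coincided the median identities would force $u=v$.
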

\begin{theorem}
 For positive integer $n$, the lattice $\text{AutCl}(D_n)$ does not contain a sublattice isomorphic to pentagon ($N_5$). \end{theorem}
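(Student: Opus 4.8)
The plan is to work out the lattice $\text{AutCl}(D_n)$ explicitly and recognise it as a direct product of chains, which is automatically distributive and hence contains no $N_5$ by Birkhoff's criterion (Theorem 2.6). For $n=1,2$ the poset $\text{AutCl}(D_n)$ is a chain and there is nothing to prove, so assume $n\ge 3$ and use Theorem 2.3.

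First I would describe the classes. By Theorem 1.1 every subgroup of $D_n$ is $\langle r^d\rangle$ (Type (1)) or $\langle r^d,r^is\rangle$ (Type (2)) with $d\mid n$. By Theorem 2.3 an automorphism acts by $r\mapsto r^a$ ($a\in\mathbb{Z}_n^*$), $s\mapsto r^bs$, so $\langle r^d\rangle$ is mapped to $\langle r^{ad}\rangle=\langle r^d\rangle$; thus each Type (1) subgroup is fixed by $\text{Aut}(D_n)$ and $[\langle r^d\rangle]$ is a singleton, while $\langle r^d,r^is\rangle\mapsto\langle r^d,r^{ai+b}s\rangle$, so letting $b$ range over $\mathbb{Z}_n$ all Type (2) subgroups with the same $d$ fall into a single class. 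Writing $A_m:=[\langle r^{n/m}\rangle]$ and $B_m:=[\langle r^{n/m},s\rangle]$ for $m\mid n$, we obtain $\text{AutCl}(D_n)=\{A_m:m\mid n\}\cup\{B_m:m\mid n\}$, of size $2\tau(n)$, in which an $A$-class and a $B$-class are always distinct (one consists of rotations only).

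Next I would read off the order and the operations, reusing the computations in the proof of Theorem 2.4: $A_m\lesssim A_{m'}$, $A_m\lesssim B_{m'}$, and $B_m\lesssim B_{m'}$ each hold iff $m\mid m'$, whereas $B_m\lesssim A_{m'}$ never holds because an automorphic image of a Type (2) subgroup still contains a reflection and so cannot lie inside any $\langle r^e\rangle$. The $K_1,K_2$ formulas of Theorem 2.4 then say that a meet is obtained by taking $\gcd$ of the order-indices and keeping a reflection only if both arguments had one, while a join takes $\operatorname{lcm}$ and keeps a reflection if either had one. Equivalently, the map $\Phi\colon\text{AutCl}(D_n)\to T(n)\times C_2$ with $\Phi(A_m)=(m,0)$ and $\Phi(B_m)=(m,1)$ is a bijection that preserves $\lesssim$ in both directions and carries $\wedge,\vee$ to the coordinatewise operations, i.e.\ a lattice isomorphism. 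Since $T(n)\cong C_{t_1+1}\times\cdots\times C_{t_k+1}$ for $n=\prod p_i^{t_i}$, the lattice $T(n)\times C_2$ is a direct product of chains, hence distributive; therefore $\text{AutCl}(D_n)$ is distributive and contains no sublattice isomorphic to $N_5$ (nor, incidentally, to $M_3$).

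The step I expect to be the real work is the bookkeeping just described: checking that $\Phi$ respects $\wedge$ and $\vee$ in the mixed cases $A\wedge B$, $A\vee B$, $B\wedge B$, $B\vee B$. Each of these is a rereading of a case already handled for Theorem 2.4, the only genuinely new point being that the type-coordinate of a meet is the minimum of the two types and that of a join is the maximum. If one would rather not invoke distributivity of the whole lattice (so as to keep the $N_5$ and $M_3$ statements separate), the same material gives a direct contradiction: in a pentagon with chain $0<a<c<1$ and side element $b$, the fact that no $B$-class lies below an $A$-class forces $0$ to be Type (1) and $1$ to be Type (2); pushing the identities $a\wedge b=c\wedge b=0$ and $a\vee b=c\vee b=1$ through the homomorphism $\rho\colon\text{AutCl}(D_n)\to T(n)$, $\rho(A_m)=\rho(B_m)=m$, into the distributive lattice $T(n)$ gives $\rho(a)=\rho(c)$; and a short inspection of the remaining possible types --- using that the $B$-classes are closed under $\wedge$ and $\vee$ --- upgrades $\rho(a)=\rho(c)$ to $a=c$, contradicting $a<c$.
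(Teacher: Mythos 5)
Your argument is correct, and it takes a genuinely different route from the paper. The paper proves the statement by contradiction: it assumes a pentagon $[H_1]<[H_2]$, $[H_3]$ exists and runs through six cases according to the types of $H_1$ and $H_3$, in each case comparing orders of generators (or stripping reflections to reduce to the all-rotation case) until it forces $[H_1]=[H_2]$ or some other collapse. You instead compute the lattice globally: using Theorem 2.3 you observe that every Type (1) class is a singleton $A_m=[\langle r^{n/m}\rangle]$ and every Type (2) class with the same index collapses to a single $B_m=[\langle r^{n/m},s\rangle]$, that the order relation is divisibility in the $m$-coordinate together with ``no $B$ below an $A$,'' and hence that $\mathrm{AutCl}(D_n)\cong T(n)\times C_2$ as posets (for $n\ge 3$; the cases $n=1,2$ are chains). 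Since an order isomorphism between lattices preserves meets and joins, and a product of chains is distributive, Birkhoff's criterion (Theorem 2.6) rules out both $N_5$ and $M_3$ at once. Your computation is consistent with the paper's own data in Theorem 2.5 ($M_2$, $T(p_1^\alpha p_2)$, and the Boolean lattice on three atoms are all of the form $T(n)\times C_2$), and you correctly exclude $n=2$, where $\mathrm{Aut}(D_2)\cong S_3$ merges $[\langle r\rangle]$ with the reflection classes so the product description fails. What your approach buys is a complete structural identification of $\mathrm{AutCl}(D_n)$ that subsumes Theorems 2.5, 2.7 and 2.8 in one stroke; what the paper's casewise argument buys is independence from having to verify the full global order structure, at the cost of considerable bookkeeping. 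The only point worth making explicit if you wrote this up is the verification that $B_m\lesssim B_{m'}$ iff $m\mid m'$ (automorphic images of reflections are reflections, and $\langle r^{n/m},r^js\rangle\le\langle r^{n/m'},s\rangle$ can always be arranged with $j=0$ once $m\mid m'$), which you have correctly indicated.
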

\begin{proof}
     For $n=1$, we have, $D_1\cong \mathbb{Z}_2$, so, AutCl($D_1$) $ \cong$ AutCl($\mathbb{Z}_2$) $\cong$ $C_2$, also, if $n=2$, we have $D_2\cong \mathbb{Z}_2 \times \mathbb{Z}_2$, therefore, $\text{AutCl}(D_2)\cong C_3$, so, the result is true for $n=1,2$. \par Now, for $n\ge 3$, let $n=p_1^{t_1}p_2^{t_2}\dots p_k^{t_k}$ be the prime factorization of $n$. If there exists a sublattice of AutCl($D_n$) isomorphic to $N_5$, then there are distinct elements $[H_1],[H_2],[H_3],[H_1]\vee'[H_2],[H_1]\wedge' [H_2] \in \text{AutCl}(D_n)$ as depicted in Figure 2. 
\begin{figure}[h!]
    \centering
    \begin{tikzpicture}[scale=1.5,  every node/.style={circle, fill=black, minimum size=6pt, inner sep=0pt}]
				
				% Nodes in set U
				\node[label=left:${[H_2]}$] (u2) at (0,1) {};
				\node[label=left:${[H_1]}$] (u1) at (0,0.5) {};
				\node[label=right:${[H_1]\vee'[H_3]}$] (u5) at (0.5,1.5) {};
                \node[label=right:${[H_3]}$] (u3) at (1,0.75) {};
				\node[label=right:${[H_1]\wedge'[H_3]}$] (u4) at (0.5,0) {};
				
				% Edges
				\draw (u1) -- (u2);
				\draw (u2) -- (u5);
                \draw (u1) -- (u4);
                \draw (u4) -- (u3);
                \draw (u3) -- (u5);

		\end{tikzpicture}
        \caption{}
\end{figure}\\
Now, consider the following cases:\vspace{0.2cm} \\
\textbf{Case 1:} If $H_1$ and $H_3$ are subgroups of $D_n$ of type (1), where $H_1=\bigl<r^{p_1^{u_1}p_2^{u_2}\dots p_k^{u_k}}\bigr>$ and $H_3=$ $\bigl<r^{p_1^{v_1}p_2^{v_2}\dots p_k^{v_k}}\bigr>$, $0\le u_i,v_i\le t_i$, $1\le i\le k$, then $[H_1]\vee' [H_3]=$    $[H_2]\vee' [H_3]=[K]$, where $K=\bigl<r^{p_1^{\text{min}\{u_1,v_1\}}p_2^{\text{min}\{u_2,v_2\}}\dots p_k^{\text{min}\{u_k,v_k\}}}\bigr> $. Since $K$ is a subgroup of $D_n$ containing rotations only, by Theorem 2.3, the class $[K]$ is singleton. Therefore, $[H_2]$ is also a singleton containing $H_2$, which is of type (1). Thus, $H_2=\bigl<r^{p^{l_1}p_2^{l_2}\dots p_k^{l_k}}\bigr>$, where $\text{min}\{u_i,v_i\}\le l_i\le u_i$. As, $[H_2]\vee' [H_3]=[H_1]\vee' [H_3]$, we have $\bigl<r^{p_1^{\text{min}\{l_1,v_1\}}p_2^{\text{min}\{l_2,v_2\}}\dots p_k^{\text{min}\{l_k,v_k\}}}\bigr>=   \bigl<r^{p_1^{\text{min}\{u_1,v_1\}}p_2^{\text{min}\{u_2,v_2\}}\dots p_k^{\text{min}\{u_k,v_k\}}}\bigr> $. On comparing the order of generators, we get,
\begin{equation*}
\frac{n}{\gcd(n, p_1^{\text{min}\{l_1,v_1\}}\dots p_k^{\text{min}\{l_k,v_k\}}   )}=\frac{n}{\gcd(n, p_1^{\text{min}\{u_1,v_1\}}\dots p_k^{\text{min}\{u_k,v_k\}})}
\end{equation*}
and which implies $p_1^{\text{min}\{l_1,v_1\}}\dots p_k^{\text{min}\{l_k,v_k\}}=p_1^{\text{min}\{u_1,v_1\}}\dots p_k^{\text{min}\{u_k,v_k\}}$. Therefore, $\text{min}\{l_i,v_i\}=\text{min}\{u_i,v_i\}$, for all $i$.
Moreover, as $[H_2]\wedge' [H_3]=[H_1]\wedge'[H_3]$, we have $\bigl<r^{p_1^{\text{max}\{l_1,v_1\}}p_2^{\text{max}\{l_2,v_2\}}\dots p_k^{\text{max}\{l_k,v_k\}}}\bigr>=  $ $ \bigl<r^{p_1^{\text{max}\{u_1,v_1\}}p_2^{\text{max}\{u_2,v_2\}}\dots p_k^{\text{max}\{u_k,v_k\}}}\bigr>$. On comparing the order of generators, we get,
\begin{equation*}
\frac{n}{\gcd(n, p_1^{\text{max}\{l_1,v_1\}}\dots p_k^{\text{max}\{l_k,v_k\}}   )}=\frac{n}{\text{gcd}(n, p_1^{\text{max}\{u_1,v_1\}}\dots p_k^{\text{max}\{u_k,v_k\}})}
\end{equation*}
and which implies $p_1^{\text{max}\{l_1,v_1\}}\dots p_k^{\text{max}\{l_k,v_k\}}=p_1^{\text{max}\{u_1,v_1\}}\dots p_i^{\text{max}\{u_k,v_k\}}$. Therefore, $\text{max}\{l_i,v_i\}=\text{max}\{u_i,v_i\}$, for all $i$. This implies $l_i=u_i$ for all $i$, and hence $[H_1]=[H_2]$, a contradiction. \vspace{0.2cm} \\
\textbf{Case 2:} If $H_1$ is of type (1) and $H_3$ is of type (2) containing only reflection, then $H_1=$ $ \bigl<r^{p_1^{u_1}p_2^{u_2}\dots p_k^{u_k}}\bigr>$, $0\le u_i\le t_i$, $1\le i\le k$, and without the loss of generality, $H_3$ can be chosen to be $\bigl<s\bigr>$. So, by the Theorem 2.4, $[H_1]\vee' [H_3]=[H_2]\vee' [H_3]=$ $ [\bigl<r^{p_1^{u_1}p_2^{u_2}\dots p_k^{u_k}},s\bigr>]$ and $[H_1]\wedge' [H_2]=[H_2]\wedge'[H_3]=[\bigl<e\bigr>]$. Since $[H_1]\lesssim [H_2]$ and as $[H_1]$ is singleton, so $H_1\subseteq H_2$. Now, by Theorem 2.4, $H_2$ does not contain any reflection, thus, $[H_1]=[H_2]$, which is a contradiction. \vspace{0.2cm}\\
\textbf{Case 3:} If $H_1$ is of type (2) containing only reflection and $H_3$ is of type (1), then without the loss of generality, choose $H_1=\bigl<s\bigr>$ and $H_3=\bigl<r^{p_1^{u_1}p_2^{u_2}\dots p_k^{u_k}}\bigr>$, $0\le u_i\le t_i$, $1\le i\le k$. So, by the Theorem 2.4, $[H_1]\wedge' [H_3]=[H_2]\wedge' [H_3]=[\bigl<e\bigr>]$ and $[H_1]\vee'[H_3]=[H_2]\vee'[H_3]=$ $[\bigl<r^{p_1^{u_1}p_2^{u_2}\dots p_k^{u_k}},s\bigr>] $. As, $[H_1]\lesssim [H_2]$, so by Theorem 2.3, $H_2$ contains a reflection, say $r^is$, for some $i$, and thus the class $[H_2]$ is same as the class $[\bigl<r^{p_1^{l_1}p_2^{l_2}\dots p_k^{l_k}},s\bigr>]$, for some $l_i$, with $0\le l_i \le t_i$, $1\le i\le k$. As, $[H_2]\vee'[H_3]=[H_1]\vee'[H_3]$, we have $[\bigl<r^{p_1^{\text{min}\{l_1,u_1\}}p_2^{\text{min}\{l_2,u_2\}}\dots p_k^{\text{min}\{l_k,u_k\}}},s\bigr>]= $ $  [\bigl<r^{p_1^{u_1}p_2^{u_2}\dots p_k^{u_k}},s\bigr>] $ and hence by Theorem 2.3, $\bigl<r^{p_1^{\text{min}\{l_1,u_1\}}p_2^{\text{min}\{l_2,u_2\}}\dots p_k^{\text{min}\{l_k,u_k\}}}\bigr>=
\bigl<r^{p_1^{u_1}p_2^{u_2}\dots p_k^{u_k}}\bigr>$. On comparing the order of generators, we get,
\begin{equation*}
\frac{n}{\gcd(n, p_1^{\text{min}\{l_1,u_1\}}\dots p_k^{\text{min}\{l_k,u_k\}}   )}=\frac{n}{\gcd(n, p_1^{u_1}\dots p_k^{u_k})}
\end{equation*}
and which implies $p_1^{\text{min}\{l_1,u_1\}}\dots p_k^{\text{min}\{l_k,u_k\}}=p_1^{u_1}\dots p_k^{u_k}$. Therefore, $\text{min}\{l_i,u_i\}=u_i$, for all $i$, so, $u_i\le l_i$, for all $i$. Similarly, as $[H_2]\wedge'[H_3]=[H_1]\wedge'[H_3]$, we have $\bigl<r^{p_1^{\text{max}\{l_1,u_1\}}p_2^{\text{max}\{l_2,u_2\}}\dots p_k^{\text{max}\{l_k,u_k\}}}\bigr> $ $=\bigl<r^{p_1^{l_1}p_2^{l_2}\dots p_k^{l_k}}\bigr>=\bigl<e\bigr>$, and therefore, $[H_2]=\bigl<s\bigr>=[H_1]$, a contradiction.
\vspace{0.2cm} \\
\textbf{Case 4:} If $H_1$ is of type (1) and $H_3$ is of type (2), then $H_1=\bigl<r^{p_1^{u_1}p_2^{u_2}\dots p_k^{u_k}}\bigr>$ and without the loss of generality, assume that $H_3=$ $\bigl<r^{p_1^{v_1}p_2^{v_2}\dots p_k^{v_k}},s\bigr>$, $0\le u_i,v_i\le t_i$, $1\le i\le k$. Clearly, $[H_1]\vee' [H_3]=[H_2]\vee' [H_3]= [\bigl<r^{p_1^{\text{min}\{u_1,v_1\}}p_2^{\text{min}\{u_2,v_2\}}\dots p_k^{\text{min}\{u_k,v_k\}}},s\bigr>]$ and $[H_1]\wedge' [H_3]=[H_2]\wedge' [H_3]=$ 
$[\bigl<r^{p_1^{\text{max}\{u_1,v_1\}}p_2^{\text{max}\{u_2,v_2\}}\dots p_k^{\text{max}\{u_k,v_k\}}}\bigr>]$. This implies no subgroup in $[H_2]$ contains reflections. So, consider $H_2=\bigl<r^{p_1^{l_1}p_2^{l_2}\dots p_k^{l_k}}\bigr>$ for some $l_i$, $1\le i\le k$ and let $K_l=H_l \backslash \{r^is\hspace{0.2cm}|\hspace{0.2cm} 0\le i \le n-1\} $, for $l=1,2,3$, then clearly, $K_l\le H_l$ and by the Theorem 2.4, $[K_1]\vee' [K_3]=[K_2]\vee'[K_3]=[\bigl<r^{p_1^{\min\{u_1,v_1\}}p_2^{\min\{u_2,v_2\}}\dots p_k^{\min\{u_k,v_k\}}}\bigr>]$ and $[K_1]\wedge' [K_3]=[K_2]\wedge'[K_3]=[\bigl<r^{p_1^{\text{max}\{u_1,v_1\}}p_2^{\text{max}\{u_2,v_2\}}\dots p_k^{\text{max}\{u_k,v_k\}}}\bigr>]$. As, $K_1=H_1$ and $K_2=H_2$, so $[K_1]$ and $[K_2]$ are distinct and $[K_1]\lesssim[K_2]$. Certainly, $[K_3]\lesssim[K_2]$ is not possible, as if $[K_3]\lesssim [K_2]$, then this would imply, $[K_1]\wedge'[K_3]=[K_2]\wedge'[K_3]=[K_3]$, which implies $[K_3]\lesssim [K_1]$, so, $[H_1]=[K_1]=[K_1]\vee'[K_3]=[K_2]\vee'[K_3]=[K_2]=[H_2]$, a contradiction. Similarly, $[K_3]\lesssim [K_1]$ is not possible. Furthermore, $[K_2]\lesssim [K_3]$ is not possible, if $[K_2]\lesssim [K_3]$, then $[H_2]=[K_2]\lesssim[K_3]\lesssim [H_3]$, a contradiction, and similarly, $[K_1]\lesssim [K_3]$ is not possible. So, $[K_1],[K_2]$ and $[K_3]$ are distinct classes with $[K_1],[K_3]$ are incomparable and similarly $[K_2],[K_3]$ are incomparable. Therefore, $[K_1]\wedge'[K_3],[K_1]\vee'[K_3]$ are distinct from $[K_1]$ and $[K_3]$. Furthermore, $[K_1]\vee'[K_3]$ and $[K_1]\wedge'[K_3]$ are distinct, else if 
$[K_1]\vee'[K_3]=[K_1]\wedge'[K_3]$, then as, $[K_1]\wedge'[K_3]\lesssim[H_1]\lesssim[H_2]\lesssim[K_1]\vee'[K_3]$, which implies $[H_1]=[H_2]$, a contradiction. Certainly, $[K_1]\wedge'[K_3]$ is distinct from $[K_2]$ as $K_2=H_2$ and $[K_1]\wedge'[K_3]=[H_1]\wedge'[H_3]$. Also, $[K_1]\vee'[K_3]$ is distinct from $[K_2]$ else, $[K_2]=[K_1]\vee'[K_3]=[K_2]\vee'[K_3]$, which implies $[K_3]\lesssim [K_2]$, a contradiction. Thus, we have distinct $[K_1],[K_2],[K_3],[K_1]\vee'[K_3],[K_1]\wedge' [K_3]\in \text{AutCl}(D_n)$ as shown in Figure 3, which is not possible by case 1. 
\begin{figure}[h!]
    \centering
    \begin{tikzpicture}[scale=1.5,  every node/.style={circle, fill=black, minimum size=6pt, inner sep=0pt}]
				
				% Nodes in set U
				\node[label=left:${[K_2]}$] (u2) at (0,1) {};
				\node[label=left:${[K_1]}$] (u1) at (0,0.5) {};
				\node[label=right:${[K_1]\vee'[K_3]}$] (u5) at (0.5,1.5) {};
                \node[label=right:${[K_3]}$] (u3) at (1,0.75) {};
				\node[label=right:${[K_1]\wedge'[K_3]}$] (u4) at (0.5,0) {};
				
				% Edges
				\draw (u1) -- (u2);
				\draw (u2) -- (u5);
                \draw (u1) -- (u4);
                \draw (u4) -- (u3);
                \draw (u3) -- (u5);

		\end{tikzpicture}
        \caption{}
\end{figure}\\
\textbf{Case 5:} If $H_1$ is of type (2) and $H_3$ is of type (1), then without the loss of generality, assume that $H_1=\bigl<r^{p_1^{u_1}p_2^{u_2}\dots p_k^{u_k}},s\bigr>$ and $H_3=$$\bigl<r^{p_1^{v_1}p_2^{v_2}\dots p_k^{v_k}}\bigr>$, $0\le u_i,v_i\le t_i$, $1\le i\le k$. So, $[H_1]\vee' [H_3]=[H_2]\vee' [H_3]= [\bigl<r^{p_1^{\text{min}\{u_1,v_1\}}p_2^{\text{min}\{u_2,v_2\}}\dots p_k^{\text{min}\{u_k,v_k\}}},s\bigr>]$ and $[H_1]\wedge' [H_3]=[H_2]\wedge' [H_3]$ $=[\bigl<r^{p_1^{\text{max}\{u_1,v_1\}}p_2^{\text{max}\{u_2,v_2\}}\dots p_k^{\text{max}\{u_k,v_k\}}}\bigr>]$ and so all subgroups of $[H_2]$ contain reflections. Let $K_l=H_l \backslash \{r^is\hspace{0.2cm}|\hspace{0.2cm}0\le i\le n-1\} $, for $l=1,2,3$, then clearly $K_l\le H_l$ and by Theorem 2.4, $[K_1]\vee' [K_3]=[K_2]\vee'[K_3]=[\bigl<r^{p_1^{\min\{u_1,v_1\}}p_2^{\min\{u_2,v_2\}}\dots p_k^{\min\{u_k,v_k\}}}\bigr>]$ and $[K_1]\wedge' [K_3]=[K_2]\wedge'[K_3]=[\bigl<r^{p_1^{\text{max}\{u_1,v_1\}}p_2^{\text{max}\{u_2,v_2\}}\dots p_k^{\text{max}\{u_k,v_k\}}}\bigr>]$. Certainly, $[K_1]$ and $[K_2]$ are distinct as, $[H_1]$ and $[H_2]$ are distinct and $[K_1]\lesssim [K_2]$ as, $[H_1]\lesssim [H_2]$. Furthermore, $[K_3]\lesssim[K_2]$ is not possible, as if $[K_3]\lesssim[K_2]$, then $[H_3]=[K_3]\lesssim [K_2]\lesssim [H_2]$, a contradiction.
Also, $[K_2]\lesssim [K_3]$ is not possible, as if $[K_2]\lesssim [K_3]$, then $ [K_1]\vee'[K_3]=[K_2]\vee'[K_3]=[K_3]$, which implies $[K_1]\lesssim [K_3]$ and hence $[K_1]=[K_1]\wedge' [K_3]= [K_2]\wedge ' [K_3]= [K_2]$ and therefore, $[H_1]=[H_2]$, a contradiction and hence $[K_2]$ and $[K_3]$ are incomparable. Also, $[K_3]\lesssim [K_1]$ is not possible as, if $[K_3]\lesssim [K_1]$, then $[H_3]=[K_3]\lesssim [K_1]\lesssim[H_1]$, a contradiction. Clearly, $[K_1]\lesssim [K_3]$ is not possible, else, we have $[K_1]=[K_1]\wedge' [K_3]= [K_2]\wedge ' [K_3]= [K_2]$ and therefore, $[H_1]=[H_2]$, a contradiction and hence $[K_1]$ and $[K_3]$ are incomparable. Therefore, $[K_1]\wedge'[K_3],[K_1]\vee'[K_3]$ are distinct from $[K_1]$ and $[K_3]$. Also, $[K_1]\wedge'[K_3]$ and $[K_2]$ are distinct else, $[K_1]\wedge'[K_3]=[K_1]=[K_2]$, which implies $[H_1]=[H_2]$, a contradiction. Certainly, $[K_1]\vee'[K_3]$ is distinct from $[K_2]$ else, $[K_1]\vee'[K_3]=[K_2]\vee'[K_3]=[K_2]$, which implies $[K_3]\lesssim [K_2]$, a contradiction. Lastly, $[K_1]\wedge'[K_3]$ and $[K_1]\vee'[K_3]$ are distinct else, $[H_1]\wedge'[H_2]=[K_1]\wedge'[K_3]=[K_2]=[H_3]$, a contradiction. Thus, we have distinct $[K_1],[K_2],[K_3],[K_1]\vee'[K_3],[K_1]\wedge' [K_3]\in \text{AutCl}(D_n)$ as in Figure 3, which is not possible by case 1. \vspace{0.2cm}\\
\textbf{Case 6:} If both $H_1$ and $H_3$ are of type (2), then without the loss of generality, assume that $H_1=\bigl<r^{p_1^{u_1}p_2^{u_2}\dots p_k^{u_k}},s\bigr>$ and $H_3=$ $ \bigl<r^{p_1^{v_1}p_2^{v_2}\dots p_k^{v_k}},s\bigr>$, $0\le u_1,v_i\le t_i$, $1\le i\le k$. So, $[H_1]\vee' [H_3]=[H_2]\vee' [H_3]= [\bigl<r^{p_1^{\text{min}\{u_1,v_1\}}p_2^{\min\{u_2,v_2\}}\dots p_k^{\min\{u_k,v_k\}}},s\bigr>]$ and $[H_1]\wedge' [H_3]=[H_2]\wedge' [H_3]$ $=[\bigl<r^{p_1^{\text{max}\{u_1,v_1\}}p_2^{\text{max}\{u_2,v_2\}}\dots p_k^{\text{max}\{u_k,v_k\}}},s\bigr>]$. Let $K_l=H_l \backslash \{r^is\hspace{0.2cm}|\hspace{0.2cm} 0\le i\le n-1\} $, for $l=1,2,3$, then $K_l\le H_l$ and by Theorem 2.4, $[K_1]\vee' [K_3]= [K_2]\vee'[K_3]=[\bigl<r^{p_1^{\min\{u_1,v_1\}}p_2^{\min\{u_2,v_2\}}\dots p_k^{\min\{u_k,v_k\}}}\bigr>]$ and $[K_1]\wedge' [K_3]=[K_2]\wedge'[K_3]=[\bigl<r^{p_1^{\text{max}\{u_1,v_1\}}p_2^{\text{max}\{u_2,v_2\}}\dots p_k^{\text{max}\{u_k,v_k\}}}\bigr>]$. Furthermore, by the choices of $H_1,H_3$ and by Theorem 2.4, we have $[K_1],[K_2],[K_3],[K_1]\vee'[K_3],[K_1]\wedge' [K_3]$ are all distinct as in Figure 3, again which is not possible by case 1.\end{proof}
\begin{theorem}
     For a positive integer $n$, the lattice $\text{AutCl}(D_n)$ does not contain a sublattice isomorphic to a diamond ($M_3$). \end{theorem}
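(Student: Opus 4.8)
The plan is to argue by contradiction, following the template of Theorem 2.7 and exploiting the dichotomy between Type (1) and Type (2) subgroups of $D_n$. For $n=1,2$ the statement is immediate, since $\text{AutCl}(D_1)\cong C_2$ and $\text{AutCl}(D_2)\cong C_3$. So assume $n\ge 3$ and suppose $\text{AutCl}(D_n)$ contains a sublattice isomorphic to $M_3$: there are pairwise distinct, pairwise incomparable classes $[H_1],[H_2],[H_3]$ together with classes $[M],[N]$ such that $[H_i]\vee'[H_j]=[N]$ and $[H_i]\wedge'[H_j]=[M]$ for all $i<j$. Two consequences of Theorem 2.3 will be used repeatedly: (i) every automorphism of $D_n$ sends rotations to rotations and reflections to reflections, so the class of a Type (1) subgroup is a singleton consisting of a Type (1) subgroup, the class of a Type (2) subgroup consists of Type (2) subgroups, and in particular a Type (1) class is never equal to a Type (2) class; and (ii) the formulas established in the proof of Theorem 2.4 show that $[H_i]\vee'[H_j]$ is of Type (1) precisely when both $H_i,H_j$ are of Type (1), and dually $[H_i]\wedge'[H_j]$ is of Type (1) unless both $H_i,H_j$ are of Type (2).

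I would first dispose of the mixed situations. If exactly two of $H_1,H_2,H_3$, say $H_1,H_2$, are of Type (1), then by (ii) the class $[H_1]\vee'[H_2]$ is of Type (1) whereas $[H_1]\vee'[H_3]$ is of Type (2); since both equal $[N]$, this contradicts (i). If exactly one, say $H_1$, is of Type (1), then $[H_1]\wedge'[H_2]$ is of Type (1) whereas $[H_2]\wedge'[H_3]$ is of Type (2); since both equal $[M]$, this again contradicts (i). Hence it remains to handle the case that all three $H_i$ are of Type (1) and the case that all three are of Type (2).

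If all the $H_i$ are of Type (1), write $H_i=\langle r^{d_i}\rangle$ with $d_i\mid n$. By Case 1 of the proof of Theorem 2.4 (and since Type (1) classes are singletons), $[H_i]\vee'[H_j]=[\langle r^{\gcd(d_i,d_j)}\rangle]$ and $[H_i]\wedge'[H_j]=[\langle r^{\operatorname{lcm}(d_i,d_j)}\rangle]$, and comparing the orders of generators we get that all three numbers $\gcd(d_i,d_j)$ coincide and all three numbers $\operatorname{lcm}(d_i,d_j)$ coincide. Fix a prime $p$ and, after relabelling, let $v_p(d_1)\le v_p(d_2)\le v_p(d_3)$; equality of the three pairwise minima forces $v_p(d_1)=v_p(d_2)$, and equality of the three pairwise maxima forces $v_p(d_2)=v_p(d_3)$, so $v_p(d_1)=v_p(d_2)=v_p(d_3)$. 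As $p$ is arbitrary, $d_1=d_2=d_3$, contradicting distinctness. If all the $H_i$ are of Type (2), then, exactly as in the proof of Theorem 2.4, we may take $H_i=\langle r^{d_i},s\rangle$ with $d_i\mid n$, and we pass to the rotation subgroups $K_i=H_i\cap\langle r\rangle=\langle r^{d_i}\rangle$. By Case 4 of the proof of Theorem 2.4, each of $[H_i]\vee'[H_j]$ and $[H_i]\wedge'[H_j]$ is a Type (2) class whose rotation part is $[K_i]\vee'[K_j]$, respectively $[K_i]\wedge'[K_j]$; moreover $[\langle r^{d},s\rangle]\lesssim[\langle r^{d'},s\rangle]$ iff $d'\mid d$ iff $[\langle r^{d}\rangle]\lesssim[\langle r^{d'}\rangle]$, so distinctness and pairwise incomparability of $[H_1],[H_2],[H_3]$ pass to $[K_1],[K_2],[K_3]$. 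One then checks, exactly as in the $N_5$ argument, that $[K_1],[K_2],[K_3],[K_1]\vee'[K_2],[K_1]\wedge'[K_2]$ are pairwise distinct, so they form an $M_3$ sublattice consisting entirely of Type (1) classes, which is impossible by the previous case.

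The only step requiring genuine care is the last one: one has to confirm that replacing the Type (2) subgroups $H_i$ by their rotation parts $K_i$ really produces an $M_3$, i.e.\ that the three middle classes remain pairwise incomparable, share a common meet and a common join, and are all distinct from that meet and that join. This is precisely the bookkeeping carried out for $N_5$ in the proof of Theorem 2.7 and it carries over verbatim. Everything else is an immediate consequence of the meet/join formulas of Theorem 2.4 together with the rotation-versus-reflection dichotomy coming from the description of $\text{Aut}(D_n)$ in Theorem 2.3. Combined with Theorem 2.7 and Theorem 2.6, this shows that $\text{AutCl}(D_n)$ is a distributive lattice.
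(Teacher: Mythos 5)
Your proposal is correct and follows essentially the same route as the paper's proof: a case analysis on the types of the three middle subgroups, an order-of-generators (equivalently, $p$-adic valuation) comparison when all are of Type (1), and a reduction of the all-Type-(2) case to the Type (1) case by passing to the rotation subgroups $K_i=H_i\cap\langle r\rangle$. Your treatment of the mixed cases via the observation that the type of a join or meet is determined by the types of its arguments is a slightly cleaner packaging of the paper's Cases 2--4, but the underlying reflection-versus-rotation argument is the same.
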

\begin{proof}
     The result holds trivially for $n=1,2$. For $n\ge 3$, let $n=p_1^{t_1}p_2^{t_2}\dots p_k^{t_k}$ be the prime factorization of $n$. Suppose that there exists a sublattice of AutCl($D_n$) isomorphic to $M_3$, then there are distinct elements $[H_1],[H_2],[H_3],[H_1]\vee'[H_2],[H_1]\wedge' [H_2] \in \text{AutCl}(D_n)$ as depicted in Figure 4.
\begin{figure}[h]
    \centering
    \begin{tikzpicture}[scale=1.5,  every node/.style={circle, fill=black, minimum size=6pt, inner sep=0pt}]
				
				% Nodes in set U
				\node[label=left:${[H_1]}$] (u1) at (-0.7,0) {};
				\node[label=right:${[H_3]}$] (u3) at (0.7,0) {};
				\node[label=left:${[H_1]\vee'[H_2]}$] (u4) at (0,0.7) {};
				\node[label=left:${[H_1]\wedge'[H_2]}$] (u5) at (0,-0.7) {};
                \node[label=left:${[H_2]}$] (u2) at (0,0) {};
            
				% Edges
				\draw (u1) -- (u4);
				\draw (u4) -- (u3);
				\draw (u3) -- (u5);
				\draw (u5) -- (u1);
                \draw (u5) -- (u2);
                \draw (u2) -- (u4);

		\end{tikzpicture}
        \caption{}
\end{figure}\\

Consider the following cases: \vspace{0.2cm} \\
\textbf{Case 1:} If both $H_1$ and $H_2$ are of type (1), then $H_1=\bigl<r^{p_1^{u_1}p_2^{u_2}\dots p_k^{u_k}}\bigr>$ and $H_2=$ $\bigl <r^{p_1^{v_1}p_2^{v_2}\dots p_k^{v_k}}\bigr>$, $0\le u_i,v_i\le t_i$, $1\le i\le k$, so, $[H_1]\vee' [H_2]=[H_1]\vee' [H_3]=[H_2]\vee'[H_3]=[K]$, where $K=\bigl<r^{p_1^{\min\{u_1,v_1\}}p_2^{\min\{u_2,v_2\}}\dots p_k^{\min\{u_k,v_k\}}}\bigr> $. Therefore, no subgroup in $[H_3]$ contains a reflection and so $[H_3]=[\bigl<r^{p_1^{l_1}p_2^{l_2}\dots p_k^{l_k}}\bigr>]$, for some $l_i$. As each classes are singleton and $[H_1]\vee' [H_2]=[H_1]\vee' [H_3]$, we have $\bigl<r^{p_1^{\min\{u_1,v_1\}}p_2^{\min\{u_2,v_2\}}\dots p_k^{\min\{u_k,v_k\}}}\bigr>$ = $\bigl<r^{p_1^{\min\{u_1,l_1\}}p_2^{\min\{u_2,l_2\}}\dots p_k^{\min\{u_k,l_k\}}}\bigr>$. On comparing the order of generators, we get, 
\begin{equation*}
\frac{n}{\gcd(n, p_1^{\min\{u_1,v_1\}}\dots p_k^{\min\{u_k,v_k\}})}=    \frac{n}{\gcd(n, p_1^{\min\{u_1,l_1\}}\dots p_k^{\min\{u_k,l_k\}}   )}
\end{equation*}
and which implies $p_1^{\min\{u_1,v_1\}}\dots p_i^{\min\{u_k,v_k\}}=p_1^{\min\{u_1,l_1\}}\dots p_k^{\min\{u_k,l_k\}}$. Therefore, $\min\{u_i,v_i\}=\min\{u_i,l_i\}$, for all $i$. Furthermore, as $[H_1]\wedge' [H_2]=[H_1]\wedge'[H_3]$, we have $\bigl<r^{p_1^{\text{max}\{u_1,v_1\}}p_2^{\text{max}\{u_2,v_2\}}\dots p_k^{\text{max}\{u_k,v_k\}}}\bigr>=  $ $ \bigl<r^{p_1^{\text{max}\{u_1,l_1\}}p_2^{\text{max}\{u_2,l_2\}}\dots p_k^{\text{max}\{u_k,l_k\}}}\bigr>$. On comparing the order of generators, we get,
\begin{equation*}
\frac{n}{\gcd(n, p_1^{\text{max}\{u_1,v_1\}}\dots p_k^{\text{max}\{u_k,v_k\}}   )}=\frac{n}{\text{gcd}(n, p_1^{\text{max}\{u_1,l_1\}}\dots p_k^{\text{max}\{u_k,l_k\}})}
\end{equation*}
and which implies $p_1^{\text{max}\{u_1,v_1\}}\dots p_k^{\text{max}\{u_k,v_k\}}=p_1^{\text{max}\{u_1,l_1\}}\dots p_i^{\text{max}\{u_k,l_k\}}$. Therefore, $\text{max}\{u_i,v_i\}=\text{max}\{u_i,l_i\}$, for all $i$, and this implies $l_i=v_i$, for all $i$, and consequently, $[H_2]=[H_3]$, a contradiction.\vspace{0.2cm}\\
\textbf{Case 2:} If $H_1$ is of type (1) and $H_2$ is of type (2) containing only rotation, then $H_1=$ $\bigl<r^{p_1^{u_1}p_2^{u_2}\dots p_k^{u_k}}\bigr>$ and without the loss of generality, assume that $H_2=\bigl<s\bigr>$. So, $[H_1]\vee'[H_2]=[\bigl<r^{p_1^{u_1}p_2^{u_2}\dots p_k^{u_k}},s\bigr> ]$ and $[H_1]\wedge' [H_2]=[\bigl<e\bigr>]$. As, $[H_2]\wedge' [H_3]=[H_1]\wedge' [H_2]=[\bigl<e\bigr>]$, so no subgroup of $[H_3]$ contains reflections and as $[H_2]\vee' [H_3]=[\bigl<r^{p_1^{u_1}p_2^{u_2}\dots p_k^{u_k}},s\bigr> ] $, we have $[H_3]=[H_1]$, which is a contradiction. \vspace{0.2cm}\\
\textbf{Case 3:}  If $H_1$ is of type (1) and $H_2$ is of type (2), then $H_1=\bigl<r^{p_1^{u_1}p_2^{u_2}\dots p_k^{u_k}}\bigr>$ and without the loss of generality, assume that $H_2=$ $\bigl<r^{p_1^{v_1}p_2^{v_2}\dots p_k^{v_k}},s\bigr>$, $0\le u_i,v_i\le t_i$, $1\le i\le k$. So, $[H_1]\vee' [H_2]=[H_1]\vee' [H_3]=[H_2]\vee'[H_3]=$ $[\bigl<r^{p_1^{\min\{u_1,v_1\}}p_2^{\min\{u_2,v_2\}}\dots p_k^{\min\{u_k,v_k\}}},s\bigr> ]$ and $[H_1]\wedge' [H_2]=[H_1]\wedge' [H_3]=[H_2]\wedge'[H_3]=$ $[\bigl<r^{p_1^{\text{max}\{u_1,v_1\}}p_2^{\text{max}\{u_2,v_2\}}\dots p_k^{\text{max}\{u_k,v_k\}}}\bigr> ]$, therefore, no subgroups of $[H_2]\wedge'[H_3]$ contains a reflection. Clearly, all subgroup of the class $[H_3]$ contains a reflection because subgroups in $[H_1]\vee'[H_3]$ contains reflections, this is because $[H_1]\vee' [H_2]=[H_1]\vee'[H_3]$ and $H_1$ is of type (1), which implies subgroups of $[H_2]\wedge'[H_3]$ also contains reflections, a contradiction. \vspace{0.2cm} \\
\textbf{Case 4:}  If $H_1$ is of type (2) and $H_2$ is of type (2) containing rotation only, then without the loss of generality, assume that $H_1=$ $\bigl<r^{p_1^{u_1}p_2^{u_2}\dots p_k^{u_k}},s\bigr>$ and $H_2=\bigl<s\bigr>$, $0\le u_i\le t_i$, $1\le i\le k$. So, $[H_1]\vee' [H_2]=[H_1]\vee' [H_3]=[H_2]\vee'[H_3]=[\bigl<r^{p_1^{u_1}p_2^{u_2}\dots p_k^{u_k}},s\bigr> ]=[H_1]$, a contradiction. \vspace{0.2cm}\\
\textbf{Case 5:} If both $H_1$ and $H_2$ are of type (2), then without the loss of generality, assume that $H_1=\bigl<r^{p_1^{u_1}p_2^{u_2}\dots p_k^{u_k}},s\bigr>$ and $H_2=$ $\bigl<r^{p_1^{v_1}p_2^{v_2}\dots p_k^{v_k}},s\bigr>$, $0\le u_i,v_i\le t_i$, $1\le i\le k$. So, $[H_1]\vee' [H_2]=[H_1]\vee' [H_3]=[H_2]\vee'[H_3]=[\bigl<r^{p_1^{\min\{u_1,v_1\}}p_2^{\min\{u_2,v_2\}}\dots p_k^{\min\{u_k,v_k\}}},s\bigr> ]$ and $[H_1]\wedge' [H_2]=[H_1]\wedge' [H_3]=[H_2]\wedge'[H_3]=$ $[\bigl<r^{p_1^{\text{max}\{u_1,v_1\}}p_2^{\text{max}\{u_2,v_2\}}\dots p_k^{\text{max}\{u_k,v_k\}}},s\bigr> ]$. Since all rotations of $D_n$ are closed under its operation, this case reduces to case 1. \end{proof}
\begin{remark}
    Note that in the proof of Theorem 2.7 and 2.8, whenever we chose a type (2) subgroup, without the loss of generality, we represented it by $\bigl<r^d,s\bigr>$ with $d|n$, instead of $\bigl<r^d,r^is\bigr>$ with $d|n$, $0\le i\le d-1 $, as $ [\bigl<r^d,s\bigr>]=$ $[\bigl<r^d,r^is\bigr>]$.
\end{remark}
\begin{corollary}
     $\text{AutCl}(D_n)$ is a modular lattice for all positive integer $n$. 
\end{corollary}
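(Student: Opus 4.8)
The plan is to obtain modularity as an immediate consequence of the two exclusion results just proved. By Theorem 2.7, $\text{AutCl}(D_n)$ contains no sublattice isomorphic to the pentagon $N_5$, and by Theorem 2.8 it contains no sublattice isomorphic to the diamond $M_3$. Hence, by the Birkhoff characterization recorded in Theorem 2.6, $\text{AutCl}(D_n)$ is a distributive lattice. Since every distributive lattice is modular --- the modular identity $a\vee(b\wedge c)=(a\vee b)\wedge c$ for $a\le c$ being a particular instance of the distributive law --- it follows that $\text{AutCl}(D_n)$ is modular for every positive integer $n$.

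Alternatively, one can bypass distributivity altogether and invoke Dedekind's criterion: a lattice is modular if and only if it contains no sublattice isomorphic to $N_5$. Theorem 2.7 supplies precisely this, so $\text{AutCl}(D_n)$ is modular directly. Either route is a one-line deduction once Theorems 2.7 and 2.8 are available; all the genuine work --- the exhaustive case analysis according to whether the subgroups involved are of Type (1) or Type (2), together with the trick of stripping the reflections (passing from $H_l$ to $K_l$) to reduce the mixed cases to the Type (1) case --- has already been carried out in those proofs.

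There is essentially no obstacle remaining for this corollary; the only point requiring a moment's care is that Theorem 2.6 as stated is the criterion for \emph{distributivity}, not for modularity, so in the write-up one should either first conclude that $\text{AutCl}(D_n)$ is distributive and then remark that distributive lattices are modular, or else cite the separate Dedekind $N_5$-criterion. I would adopt the first formulation, since it simultaneously records the stronger fact that $\text{AutCl}(D_n)$ is distributive, which is the statement actually emphasised in the introduction and presumably used in the sequel.
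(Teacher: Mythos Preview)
Your proposal is correct and matches the paper's approach exactly: the corollary is stated without proof there, as it follows immediately from Theorems~2.7 and~2.8 via the Birkhoff criterion of Theorem~2.6 (distributive, hence modular), or equivalently from Theorem~2.7 alone via Dedekind's $N_5$-criterion. Your remark that the stronger distributivity conclusion is what the paper is really after is apt, and your discussion of which formulation to adopt is sound.
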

As groups of quaternions and generalized quaternions are particular classes of more generalized quaternion group $Q_{4m}$, it is interesting to work with $Q_{4m}$. The following Theorem describes the automorphism group of more generalized quaternions $Q_{4m}$.
\begin{theorem} \cite{conradquad}
    For $m\ge 3 $, \begin{equation*}
        \text{Aut}(Q_{4m})\cong \biggl\{\begin{pmatrix}
            a & b \\ 0 & 1
        \end{pmatrix}\hspace{0.2cm}|\hspace{0.2cm} a \in \mathbb{Z}_{2m}^*,\hspace{0.1cm} b\in \mathbb{Z}_{2m} \biggr\}    \end{equation*}
\end{theorem}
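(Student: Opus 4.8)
The plan is to prove that the assignment sending an automorphism $\varphi$ with $\varphi(x)=x^{a}$ and $\varphi(y)=x^{b}y$ to the matrix $\begin{pmatrix} a & b \\ 0 & 1\end{pmatrix}$ is a well-defined group isomorphism. I would organize the argument in three parts: (i) every automorphism of $Q_{4m}$ has this shape; (ii) conversely, every pair $(a,b)$ with $a\in\mathbb{Z}_{2m}^{*}$ and $b\in\mathbb{Z}_{2m}$ actually arises from an automorphism; and (iii) this correspondence carries composition of automorphisms to matrix multiplication.

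For part (i), the structural input is that $\langle x\rangle$ is a characteristic subgroup of $Q_{4m}$ when $m\ge 3$. Using the defining relations, any element outside $\langle x\rangle$ is of the form $x^{i}y$, and $yx^{i}=x^{-i}y$ gives $(x^{i}y)^{2}=x^{i}x^{-i}y^{2}=y^{2}=x^{m}$, an element of order $2$; hence every element outside $\langle x\rangle$ has order $4$. Since $m\ge 3$ forces $2m>4$, the elements of order $2m$ in $Q_{4m}$ are precisely the generators of the cyclic group $\langle x\rangle$, so $\langle x\rangle$ is invariant under every automorphism. Thus any $\varphi\in\text{Aut}(Q_{4m})$ restricts to an automorphism of $\langle x\rangle\cong\mathbb{Z}_{2m}$, so $\varphi(x)=x^{a}$ with $a\in\mathbb{Z}_{2m}^{*}$, and $\varphi(y)$, being outside $\langle x\rangle$, equals $x^{b}y$ for some $b\in\mathbb{Z}_{2m}$.

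For part (ii), I would invoke von Dyck's theorem: it is enough to verify that $x\mapsto x^{a}$, $y\mapsto x^{b}y$ respects the four defining relations of $Q_{4m}$. The relations $x^{2m}=e$ and $yxy^{-1}=x^{-1}$ are immediate; for $y^{4}=e$ one uses $(x^{b}y)^{2}=y^{2}=x^{m}$ as above; and for $x^{m}=y^{2}$ one needs $x^{am}=x^{m}$, i.e. $(a-1)m\equiv 0\pmod{2m}$, which holds because $\gcd(a,2m)=1$ forces $a$ to be odd. This produces an endomorphism $\varphi_{a,b}$; it is onto because $\gcd(a,2m)=1$ gives $\langle x^{a}\rangle=\langle x\rangle$, so $x$ and then $y=x^{-b}(x^{b}y)$ lie in the image, and a surjective endomorphism of a finite group is an automorphism.

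For part (iii), a direct computation yields $\varphi_{a,b}\circ\varphi_{c,d}=\varphi_{ac,\,ad+b}$, matching $\begin{pmatrix} a & b \\ 0 & 1\end{pmatrix}\begin{pmatrix} c & d \\ 0 & 1\end{pmatrix}=\begin{pmatrix} ac & ad+b \\ 0 & 1\end{pmatrix}$, so together with (i) and (ii) the map $\varphi_{a,b}\mapsto\begin{pmatrix} a & b \\ 0 & 1\end{pmatrix}$ is a bijective homomorphism, establishing the isomorphism. The step requiring the most care — and the place where the hypothesis $m\ge 3$ is genuinely used — is part (i): one must be certain the order analysis truly forbids an automorphism from moving $\langle x\rangle$, which needs $2m\neq 4$; for $m=2$ the group is $Q_{8}$ and the conclusion indeed fails.
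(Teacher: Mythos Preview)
Your proposal is correct and follows precisely the approach the paper indicates: the paper does not give its own proof of this result but cites \cite{conradquad} and remarks that the argument rests on showing every automorphism $\varphi$ of $Q_{4m}$ satisfies $\varphi(x)=x^{a}$ and $\varphi(y)=x^{b}y$ for suitable $a,b$. Your three-part argument---proving $\langle x\rangle$ is characteristic via the order computation (which is exactly where $m\ge 3$ enters), verifying the defining relations to get the converse, and checking that composition corresponds to matrix multiplication---is the standard execution of this outline and matches the cited source.
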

The proof of Theorem 2.9 is based on on the fact that each automorphism $\varphi$ of $Q_{4m}$ is determined by image of generators $x$ and $y$. More precisely, \begin{equation*}
    \varphi(x)=x^a \hspace{0.1cm}\text{and}\hspace{0.1cm} \varphi(y)=x^by,\hspace{0.1cm} \text{where} \hspace{0.1cm} a\in \mathbb{Z}_{2m}^*, b \in \mathbb{Z}_{2m}.
    \end{equation*}
    The following result establish that the poset $\text{AutCl}(G)$ is a lattice in the case of $G=Q_{4m}$.
    \begin{theorem}
        The poset $\text{AutCl}(Q_{4m})$ is a lattice for all positive integer $m$.
    \end{theorem}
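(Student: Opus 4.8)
The plan is to follow the proof of Theorem 2.4 closely, with $D_n$ replaced by $Q_{4m}$, the rotation $r$ by $x$, the reflection $s$ by $y$, and Theorem 2.3 by Theorem 2.9. First I would dispose of the small cases: $Q_4\cong\mathbb{Z}_4$, so $\text{AutCl}(Q_4)\cong\text{AutCl}(\mathbb{Z}_4)$ is a lattice by the first example above, and $\text{AutCl}(Q_8)\cong C_4$ is a lattice by the second. So assume $m\ge 3$, where Theorem 2.9 applies: every automorphism of $Q_{4m}$ has the form $\varphi(x)=x^a$, $\varphi(y)=x^b y$ with $a\in\mathbb{Z}_{2m}^{*}$ and $b\in\mathbb{Z}_{2m}$.

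Next I would record the two structural facts that drive the argument, being the exact analogues of those about $D_n$ used in the proof of Theorem 2.4. Since $a$ is a unit modulo $2m$ and $d\mid 2m$, we have $\varphi(\langle x^d\rangle)=\langle x^{ad}\rangle=\langle x^d\rangle$, so every Type (1) subgroup of $Q_{4m}$ is characteristic and $[\langle x^d\rangle]$ is a singleton class. Moreover, the automorphism $x\mapsto x$, $y\mapsto x^i y$ carries $\langle x^d,y\rangle$ onto $\langle x^d,x^i y\rangle$, so $[\langle x^d,x^i y\rangle]=[\langle x^d,y\rangle]$ and the class of a Type (2) subgroup depends only on the divisor $d\mid m$. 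I would also note the containments in $L(Q_{4m})$: each of $\langle x^{d_1}\rangle\le\langle x^{d_2}\rangle$, $\langle x^{d_1}\rangle\le\langle x^{d_2},y\rangle$ and $\langle x^{d_1},y\rangle\le\langle x^{d_2},y\rangle$ is equivalent to $d_2\mid d_1$ (the rotation part of $\langle x^{d},y\rangle$ being $\langle x^{d}\rangle$, since $y^2=x^m\in\langle x^d\rangle$ for $d\mid m$), while no Type (2) subgroup lies inside a Type (1) subgroup; and the relevant greatest common divisors and least common multiples land in the right divisor set because $m\mid 2m$ (if $d_1\mid 2m$ and $d_2\mid m$ then $\gcd(d_1,d_2)\mid m$ and $\text{lcm}(d_1,d_2)\mid 2m$; if $d_1,d_2\mid m$ then $\text{lcm}(d_1,d_2)\mid m$).

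Then I would run the case analysis on $[H_1],[H_2]\in\text{AutCl}(Q_{4m})$, exhibiting explicit joins and meets; three cases suffice, and the extra subcase split off in the proof of Theorem 2.4 (a type (2) subgroup generated by a single reflection) needs no special handling here, since the meet formula produces $[\{e\}]$ exactly when the relevant least common multiple equals $2m$. If $H_1,H_2$ are both Type (1), then $[H_1]\vee'[H_2]=[\langle x^{\gcd(d_1,d_2)}\rangle]$ and $[H_1]\wedge'[H_2]=[\langle x^{\text{lcm}(d_1,d_2)}\rangle]$, and since all classes here are singletons, the bound in $\text{AutCl}(Q_{4m})$ coincides with the bound in $L(Q_{4m})$. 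If $H_1$ is Type (1) and $H_2$ is Type (2), with classes governed by $d_1\mid 2m$ and $d_2\mid m$, then $[H_1]\vee'[H_2]=[\langle x^{\gcd(d_1,d_2)},y\rangle]$ and $[H_1]\wedge'[H_2]=[\langle x^{\text{lcm}(d_1,d_2)}\rangle]$: for the join, any upper bound $[\bar H]$ satisfies $H_1\subseteq\bar H$ (as $H_1$ is characteristic), and since $\bar H$ also contains an automorphic image of $H_2$, which is Type (2), $\bar H$ contains some $x^{j}y$ and is therefore itself Type (2), so its rotation part is $\langle x^{e}\rangle$ with $e\mid d_1$ and $e\mid d_2$, which forces $[\langle x^{\gcd(d_1,d_2)},y\rangle]\lesssim[\bar H]$; for the meet, any lower bound is Type (1), hence characteristic, hence literally contained in $H_1$ and in the rotation part $\langle x^{d_2}\rangle$ of $H_2$. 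If $H_1,H_2$ are both Type (2) with $d_1,d_2\mid m$, then $[H_1]\vee'[H_2]=[\langle x^{\gcd(d_1,d_2)},y\rangle]$ and $[H_1]\wedge'[H_2]=[\langle x^{\text{lcm}(d_1,d_2)},y\rangle]$; for any lower bound $[\widehat H]$ I would take its canonical representative ($\langle x^e\rangle$ if it is Type (1), $\langle x^e,y\rangle$ if it is Type (2)), deduce $d_1\mid e$ and $d_2\mid e$ by comparing rotation parts with $H_1$ and $H_2$, hence $\text{lcm}(d_1,d_2)\mid e$, so this representative embeds into $\langle x^{\text{lcm}(d_1,d_2)},y\rangle$ (alternatively, one may pass to the rotation parts $K_\ell=H_\ell\setminus\{x^i y:0\le i\le 2m-1\}$, as in the proofs of Theorems 2.7 and 2.8).

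The main obstacle is the one that also makes the proof of Theorem 2.4 long: the least-upper-bound verification whenever a Type (2) subgroup is present. Type (2) subgroups are not characteristic, so an upper bound need only contain automorphic \emph{images} of $H_1$ and $H_2$; the content is that the rotation part of such an image is pinned down (being a characteristic Type (1) subgroup) while the image of $y$ is forced into the coset $\langle x\rangle y$ (because $\varphi(y)=x^b y$), so the join of the images cannot leave the claimed class. The only genuinely new bookkeeping relative to $D_n$ is the asymmetry between the Type (1) divisor set $\{d:d\mid 2m\}$ and the Type (2) divisor set $\{d:d\mid m\}$, which must be tracked when forming greatest common divisors and least common multiples; this is routine given $m\mid 2m$, and no idea beyond Theorems 1.2 and 2.9 is required.
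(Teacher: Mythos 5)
Your proposal is correct and follows essentially the same route as the paper's proof: reduce to $m\ge 2$ via $Q_4\cong\mathbb{Z}_4$ and $Q_8$, use Theorem 2.9 to show Type (1) classes are singletons and Type (2) classes depend only on $d\mid m$, and then exhibit explicit joins and meets by a case analysis on the types of $H_1$ and $H_2$. The only difference is cosmetic bookkeeping: you work with $\gcd$ and $\mathrm{lcm}$ of divisors uniformly in $m$, whereas the paper writes divisors via prime-power exponents and splits the argument according to whether $2$ divides $m$.
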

    \begin{proof}
    By example 1 and 2, we have for $m=1,2$, $\text{AutCl}(Q_{4m})$ is a lattice. We will prove the result for the case when $2$ does not divide $m$ and the proof is similar when $2$ divides $m$. Let $m=p_1^{t_1}\dots p_{k}^{t_k}$ be the prime factorization of $m$. Consider the following cases: \vspace{0.2cm}\\
        \textbf{Case 1:} If $H_1=\bigl<x^{2^{\beta_1}{p_1^{u_1}}\dots p_k^{u_k}}\bigr>$ and $H_2=\bigl<x^{2^{\beta_2}{p_1^{v_1}}\dots p_k^{v_k}}\bigr>$ with $\beta_1,\beta_2 \in \{0,1\}$, then \begin{equation*}
            [H_1]\vee' [H_2]=[K_1] \hspace{0.2cm} \text{and }\hspace{0.2cm} [H_1]\wedge' [H_2]=[K_2], 
        \end{equation*}
        where \begin{equation*}
            K_1=\bigl<x^{2^{\min\{\beta_1,\beta_2\}}p_1^{\min\{u_1,v_1\}}\dots p_k^{\min\{u_k,v_k\}}}\bigr> \hspace{0.2cm} \text{and} \hspace{0.2cm} K_2=\bigl<x^{2^{\max\{\beta_1,\beta_2\}}p_1^{\max\{u_1,v_1\}}\dots p_k^{\max\{u_k,v_k\}}}\bigr>.
        \end{equation*}
        Clearly, $[K_1]$ is an upper bound of $\{[H_1],[H_2]\}$ as, $H_1,H_2\le K_1$. Let $[\bar H]$ be an upper bound of $\{[H_1],[H_2]\}$, then as $[H_1]$ and $[H_2]$ are singletons, we have $H_1,H_2\le \bar H$ and consequently, $[K_1]\lesssim [\bar H]$, which implies $[K_1] $ is the least upper bound of $\{[H_1],[H_2]\}$. \par
        Similarly, $[K_2]$ is a lower bound of $\{[H_1],[H_2]\}$ as $K_2\le H_1,H_2$. Let $[\widehat H]$ be a lower bound of $\{[H_1],[H_2]\}$, as $[H_1]$ and $[H_2]$ are singletons, so, $[\widehat H]$ is also singleton as $\widehat H\le \bigl<x\bigr>$ and consequently, $\widehat H \le H_1\wedge H_2=K_2$ which implies $[\widehat H]\lesssim [K_2]$ and hence, $[K_2]$ is the greatest lower bound of $\{[H_1],[H_2]\}.$ \vspace{0.1cm}\\
        \textbf{Case 2:} Let $H_1=\bigl<x^{2^{\beta}{p_1^{u_1}}\dots p_k^{u_k}}\bigr>$ and $H_2=\bigl<x^{p_1^{v_1}\dots p_k^{v_k}},y\bigr>$ with $\beta \in \{0,1\}$, $0\le u_i,v_i\le t_i$ and $1\le i\le k$. \vspace{0.2cm}\\
        \textbf{Subcase 2.1:} If $\beta=0$, then
        \begin{equation*}
            [H_1]\vee' [H_2]=[K_1] \hspace{0.2cm} \text{and }\hspace{0.2cm} [H_1]\wedge' [H_2]=[K_2], 
        \end{equation*}
        \begin{equation*}
            K_1=\bigl<x^{p_1^{\min\{u_1,v_1\}}\dots p_k^{\min\{u_k,v_k\}}},y\bigr> \hspace{0.2cm} \text{and} \hspace{0.2cm} K_2=\bigl<x^{p_1^{\max\{u_1,v_1\}}\dots p_k^{\max\{u_k,v_k\}}}\bigr>.
        \end{equation*}
       It is clear that $[K_1]$ is an upper bound of $\{[H_1],[H_2]\}$ as $H_1,H_2\le K_1$. Let $[\bar H]$ be an upper bound of $\{[H_1],[H_2]\}$, then $H_1\le \bar H$ and $\bigl<x^{{p_1^{v_1}}\dots p_k^{v_k}}\bigr>\le \bar H$. Moreover, $\bar H$ contains $x^iy$, for some $i$, as $y\in H_2$ and consequently $[K_1]\lesssim [\bar H]$, so, $[K_1]$ is the least upper bound of $\{[H_1],[H_2]\}$. \par
       Certainly, $[K_2]$ is a lower bound of $\{[H_1],[H_2]\}$. Let $[\widehat H]$ be a lower bound of $\{[H_1],[H_2]\}$, then $\widehat H\le H_1$ and as $H_1=\bigl<x^{2^\beta p_1^{u_1}\dots p_k^{u_k}}\bigr>$, so, by Theorem 2.9, $[\widehat H]$ is singleton, this implies $\widehat H \le H_2$ as, $[\widehat H]\lesssim [H_2]$, therefore, $\widehat H \le K_2$ and hence, $[\widehat H]\lesssim [K_2]$, so $[K_2]$ is the greatest lower bound of $\{[H_1],[H_2]\}$. \vspace{0.2cm}\\ 
        \textbf{Subcase 2.2:} If $\beta=1$, then
        \begin{equation*}
            K_1=\bigl<x^{p_1^{\min\{u_1,v_1\}}\dots p_k^{\min\{u_k,v_k\}}},y\bigr> \hspace{0.2cm} \text{and} \hspace{0.2cm} K_2=\bigl<x^{2{p_1^{\max\{u_1,v_1\}}\dots p_k^{\max\{u_k,v_k\}}}}\bigr>.
        \end{equation*} 
        On similar line, as in subcase 2.1, $[K_1]$ is the least upper bound of $\{[H_1],[H_2]\}$ and $[K_2]$ is the greatest lower bound of $\{[H_1],[H_2]\}$. \vspace{0.2cm}\\
        \textbf{Case 3:} If $H_1=\bigl<x^{p_1^{u_1}\dots p_k^{u_k}},y\bigr>$ and $H_2=\bigl<x^{p_1^{v_1}\dots p_k^{v_k}},y\bigr>$ with $\beta_1,\beta_2 \in \{0,1\}$, $0\le u_i,v_i\le t_i$ and $1\le i\le k$, then \begin{equation*}
            [H_1]\vee' [H_2]=[K_1] \hspace{0.2cm} \text{and }\hspace{0.2cm} [H_1]\wedge' [H_2]=[K_2], 
        \end{equation*}
        where \begin{equation*}
            K_1=\bigl<x^{p_1^{\min\{u_1,v_1\}}\dots p_k^{\min\{u_k,v_k\}}},y\bigr> \hspace{0.2cm} \text{and} \hspace{0.2cm} K_2=\bigl<x^{p_1^{\max\{u_1,v_1\}}\dots p_k^{\max\{u_k,v_k\}}},y\bigr>.
        \end{equation*}
        Clearly, $[K_1]$ is an upper bound of $\{[H_1],[H_2]\}$ as $H_1,H_2\le K_1$. Let $[\bar H]$ be an upper bound of $\{[H_1],[H_2]\}$, then $\bigl<x^{p_1^{u_1}}\dots p_k^{u_k}\bigr>,\bigl<x^{p_1^{v_1}}\dots p_k^{v_k}\bigr>\le \bar H$ and $x^jy\in \bar H$ as $y\in H_1$. So, $[K_1]\lesssim[\bar H]$ and hence, $[K_1]$ is the least upper bound of $\{[H_1],[H_2]\}$. \par
        Also, $[K_2]$ is a lower bound of $\{[H_1],[H_2]\}$ as $K_2\le H_1,H_2$. Let $[\widehat H]$ be a lower bound of $\{[H_1],[H_2]\}$ then $\widehat H \backslash \{x^iy\hspace{0.2cm }|\hspace{0.2cm}0\le i\le 2m-1\}\le \widehat H$ and by Theorem 2.9, $\widehat H \backslash \{x^iy\hspace{0.2cm }|\hspace{0.2cm}0\le i\le 2m-1\}\le K_2$. Clearly, $\bigl<y\bigr>\le K_2$ and as $[(\widehat H \backslash \{x^iy\hspace{0.2cm }|\hspace{0.2cm}0\le i\le 2m-1\})\vee \bigl<y\bigr>]=[\widehat H]$, we have $[\widehat H]\lesssim [K_2]$. So, $[K_2]$ is the greatest lower bound of $\{[H_1],[H_2]\}$.  
    \end{proof}
    Note that in $\text{AutCl}(Q_{4m})$, $[\bigl<x^d,y\bigr>]=[\bigl<x^d,x^iy\bigr>]$, for $d|m$ and $0\le i\le d-1$, and hence, without the loss of generality, in Theorem 2.10, we chose $\bigl<x^d,y\bigr>$, instead of $\bigl<x^d,x^iy\bigr>$. Since Theorem 2.10 shows that $\text{AutCl}(Q_{4m})$ is a lattice, so, it is interesting to know whether this lattice is distributive. Theorem 2.6 is essentially used to show that $\text{AutCl}(Q_{4m})$ is a distributive lattice.
    \begin{theorem}
        For positive integer $m$, the lattice $\text{AutCl}(Q_{4m})$ does not contain a sublattice isomorphic to pentagon ($N_5$).
    \end{theorem}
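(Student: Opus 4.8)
The plan is to mirror, essentially verbatim, the proof of Theorem 2.7. For $m=1,2$ we have $Q_4\cong\mathbb{Z}_4$ and $Q_8$, so by Examples 1 and 2, $\text{AutCl}(Q_4)\cong C_3$ and $\text{AutCl}(Q_8)\cong C_4$, both chains, and a chain contains no sublattice isomorphic to $N_5$. For $m\ge 3$ I would argue by contradiction: suppose $[H_1],[H_2],[H_3],[H_1]\vee'[H_3],[H_1]\wedge'[H_3]\in\text{AutCl}(Q_{4m})$ form a pentagon in the configuration of Figure 2, with $[H_1]\lesssim[H_2]$ the comparable side. Write $m=p_1^{t_1}\cdots p_k^{t_k}$ and, exactly as in Theorem 2.10, carry the prime $2$ via an exponent $\beta\in\{0,1\}$ in the type (1) generators (the case $2\mid m$ being handled analogously, as in Theorem 2.10). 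Then split into six cases according to whether each of $H_1,H_3$ is of type (1), i.e.\ cyclic $\bigl<x^d\bigr>$, or of type (2), i.e.\ dicyclic; by the Remark following Theorem 2.10 we may always take the type (2) generators of the form $\bigl<x^d,y\bigr>$.

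In Case 1 ($H_1,H_3$ both of type (1)), Theorem 2.9 forces every class appearing to be a singleton, so $[H_1]\lesssim[H_2]$ gives $H_1\le H_2$; writing $H_2=\bigl<x^{2^{\ell_0}p_1^{\ell_1}\cdots p_k^{\ell_k}}\bigr>$ and using both $[H_2]\vee'[H_3]=[H_1]\vee'[H_3]$ and $[H_2]\wedge'[H_3]=[H_1]\wedge'[H_3]$, I would compare orders of the cyclic generators (the order of $x^c$ being $2m/\gcd(2m,c)$) to conclude that the componentwise $\min$'s and $\max$'s of the exponent vectors of $H_2,H_3$ agree with those of $H_1,H_3$, whence $H_1=H_2$, a contradiction. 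In Cases 2 and 3 (one of $H_1,H_3$ of type (1), the other of type (2)), I would invoke the explicit join and meet formulas from Theorem 2.10, including its subcases $\beta=0$ and $\beta=1$, together with the fact that every automorphism of $Q_{4m}$ sends $y$ to some $x^iy$; this shows $H_2$ is forced either to equal $H_1$ or to contain, or fail to contain, a dicyclic element in a way incompatible with the pentagon, giving a contradiction in each case. In Cases 4, 5 and 6 (at least one of $H_1,H_3$ of type (2)) I would pass to the ``cyclic parts'' $K_\ell:=H_\ell\setminus\{x^iy\mid 0\le i\le 2m-1\}$, which are type (1) subgroups with $K_\ell\le H_\ell$; using Theorem 2.10 one checks that the $K_\ell$ inherit the same join and meet relations, remain pairwise distinct, and sit in pentagon configuration, reducing these cases to Case 1.

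The main obstacle I anticipate is the bookkeeping around the prime $2$: type (1) subgroups $\bigl<x^d\bigr>$ allow $d\mid 2m$ whereas type (2) subgroups $\bigl<x^d,y\bigr>$ only allow $d\mid m$, so the exponent of $2$ is unconstrained in the former but capped in the latter, and the $\min$/$\max$ description of joins and meets is not perfectly symmetric in that coordinate — this is precisely why Theorem 2.10 needed the subcases $\beta=0,1$. In particular, in the reduction of Cases 4--6 to Case 1 one must verify that $K_\ell$ really recovers the rotation part of $H_\ell$ with the correct $2$-exponent, and that the equalities $[K_1]\vee'[K_3]=[K_2]\vee'[K_3]$ and $[K_1]\wedge'[K_3]=[K_2]\wedge'[K_3]$ persist. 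The parallel computation for $2\mid m$, which the statement only asserts by analogy, must be written out separately. Beyond these points, everything is routine $\gcd$ arithmetic of the same flavour as Theorem 2.7.
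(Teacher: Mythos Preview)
Your proposal is correct and follows essentially the same approach as the paper: contradiction via a pentagon configuration, direct exponent comparison when $H_1,H_3$ are both cyclic (using that such classes are singletons by Theorem~2.9), and reduction of the dicyclic cases to the cyclic one by passing to $K_\ell=H_\ell\setminus\{x^iy\}$. The only differences are cosmetic: the paper writes $m=2^{\alpha}p_1^{t_1}\cdots p_k^{t_k}$ from the outset (so the prime $2$ is just another exponent rather than a separate $\beta$ parameter), and it collapses your six cases into two --- Case~1 (both cyclic) and Case~2 (at least one dicyclic, with three subcases) --- invoking the arguments of Cases~4 and~5 of Theorem~2.7 by reference rather than repeating them.
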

    \begin{proof}
        The result is true for $m=1,2$ as, $Q_{4}\cong \mathbb{Z}_4$, so, $\text{AutCl}(Q_4)\cong C_3$ and $\text{AutCl}(Q_8)\cong C_4$. For $m\ge 3$, let $m=2^{\alpha}p_1^{t_1}p_2^{t_2}\dots p_k^{t_k}$ be the prime factorization of $m$. If there exists a sublattice of AutCl($Q_{4m}$) isomorphic to $N_5$, then there are distinct elements $[H_1],[H_2],[H_3],[H_1]\vee'[H_2],[H_1]\wedge' [H_2] \in \text{AutCl}(Q_{4m})$ as depicted in Figure 5. 
\begin{figure}[h]
    \centering
    \begin{tikzpicture}[scale=1.5,  every node/.style={circle, fill=black, minimum size=6pt, inner sep=0pt}]
				
				% Nodes in set U
				\node[label=left:${[H_2]}$] (u2) at (0,1) {};
				\node[label=left:${[H_1]}$] (u1) at (0,0.5) {};
				\node[label=right:${[H_1]\vee'[H_3]}$] (u5) at (0.5,1.5) {};
                \node[label=right:${[H_3]}$] (u3) at (1,0.75) {};
				\node[label=right:${[H_1]\wedge'[H_3]}$] (u4) at (0.5,0) {};
				
				% Edges
				\draw (u1) -- (u2);
				\draw (u2) -- (u5);
                \draw (u1) -- (u4);
                \draw (u4) -- (u3);
                \draw (u3) -- (u5);

		\end{tikzpicture}
        \caption{}
\end{figure}\\
Now, consider the following cases:\vspace{0.2cm} \\
\textbf{Case 1:} If $H_1$ and $H_3$ are subgroups of $Q_{4m}$ with $H_1=\bigl<x^{2^{\beta_1}p_1^{u_1}p_2^{u_2}\dots p_k^{u_k}}\bigr>$ and $H_3=$ $\bigl<x^{2^{\beta_2}{p_1^{v_1}p_2^{v_2}\dots p_k^{v_k}}}\bigr>$, $0\le u_i,v_i\le t_i$, $1\le i\le k$ and $0\le\beta_j \le \alpha$, $j=1,2$, then $[H_1]\vee' [H_3]=$    $[H_2]\vee' [H_3]=[K]$, where $K=\bigl<x^{2^{\min\{\beta_1,\beta_2\}}p_1^{\text{min}\{u_1,v_1\}}p_2^{\text{min}\{u_2,v_2\}}\dots p_k^{\text{min}\{u_k,v_k\}}}\bigr>$ and which implies $[H_2]\lesssim [K]$. Since $K$ is a subgroup of $\bigl<x\bigr>$, by Theorem 2.9, the class $[K]$ is singleton. Also, as $[H_2]\lesssim [K]\lesssim [\bigl<x\bigr>]$, if $H_2$ contains $x^iy$, for some $i$, then by Theorem 2.9, $[K]$ also contains $x^iy$, for some $i$, which is not possible and hence $[H_2]$ is also singleton. Thus, $H_2=\bigl<x^{2^{\beta'}{ p_1^{l_1}p_2^{l_2}\dots p_k^{l_k}}}\bigr>$, where, $\text{min}\{u_i,v_i\}\le l_i\le u_i$ and $\min\{\beta_1,\beta_2\}\le \beta'\le \beta_1$. As, $[H_2]\vee' [H_3]=[H_1]\vee' [H_3]$, we have $\bigl<x^{2^{\min\{\beta',\beta_2\}}p_1^{\text{min}\{l_1,v_1\}}p_2^{\text{min}\{l_2,v_2\}}\dots p_k^{\text{min}\{l_k,v_k\}}}\bigr>=\bigl<x^{2^{\min{\{\beta_1,\beta_2\}}}p_1^{\text{min}\{u_1,v_1\}}p_2^{\text{min}\{u_2,v_2\}}\dots p_k^{\text{min}\{u_k,v_k\}}}\bigr>$. On comparing the order of generators, we get,
\begin{equation*}
\frac{2m}{\gcd(2m, 2^{\min\{\beta',\beta_2\}}p_1^{\text{min}\{l_1,v_1\}}\dots p_k^{\text{min}\{l_k,v_k\}}   )}=\frac{2m}{\gcd(2m, 2^{\min\{\beta_1,\beta_2\}}p_1^{\text{min}\{u_1,v_1\}}\dots p_k^{\text{min}\{u_k,v_k\}})}
\end{equation*}
and which implies $2^{\min\{\beta',\beta_2\}}p_1^{\text{min}\{l_1,v_1\}}\dots p_k^{\text{min}\{l_k,v_k\}}=2^{\min\{\beta_1,\beta_2\}}p_1^{\text{min}\{u_1,v_1\}}\dots p_k^{\text{min}\{u_k,v_k\}}$. Therefore, $\text{min}\{l_i,v_i\}=\text{min}\{u_i,v_i\}$, for all $i$, and $\min\{\beta',\beta_2\}=\min\{\beta_1,\beta_2\}$.
Moreover, as $[H_2]\wedge' [H_3]=[H_1]\wedge'[H_3]$, we have $\bigl<x^{2^{\max\{\beta',\beta_2\}}p_1^{\text{max}\{l_1,v_1\}}p_2^{\text{max}\{l_2,v_2\}}\dots p_k^{\text{max}\{l_k,v_k\}}}\bigr>=  $ $ \bigl<x^{2^{\max\{\beta_1,\beta_2\}}p_1^{\text{max}\{u_1,v_1\}}p_2^{\text{max}\{u_2,v_2\}}\dots p_k^{\text{max}\{u_k,v_k\}}}\bigr>$. On comparing the order of generators, we get,
\begin{equation*}
\frac{2m}{\gcd(2m, 2^{\max\{\beta',\beta_2\}}p_1^{\text{max}\{l_1,v_1\}}\dots p_k^{\text{max}\{l_k,v_k\}}   )}=\frac{2m}{\text{gcd}(2m, 2^{\max\{\beta_1,\beta_2\}}p_1^{\text{max}\{u_1,v_1\}}\dots p_k^{\text{max}\{u_k,v_k\}})}
\end{equation*}
and which implies $p_1^{\text{max}\{l_1,v_1\}}\dots p_k^{\text{max}\{l_k,v_k\}}=p_1^{\text{max}\{u_1,v_1\}}\dots p_i^{\text{max}\{u_k,v_k\}}$. Therefore, for all $i$, $\text{max}\{l_i,v_i\}=\text{max}\{u_i,v_i\}$ and $\max\{\beta',\beta_2\}=\max\{\beta_1,\beta_2\}$, which implies $\beta'=\beta_1$ and $l_i=u_i$, for all $i$, and consequently $[H_1]=[H_2]$, a contradiction. \vspace{0.2cm} \\
\textbf{Case 2:} Let $H_1$ and $H_3$ are subgroups of $Q_{4m}$ with atleast one subgroup in $[H_1]$ or $[H_3]$ contains $x^iy$, for some $i$. \vspace{0.2cm}\\
\textbf{Subcase 2.1:} If both $[H_1]$ and $[H_3]$ contain subgroups containing $x^iy$, for some $i$, then without the loss of generality, let $H_1=\bigl<x^{p_1^{u_1}p_2^{u_2}\dots p_k^{u_k}},y\bigr>$ and $H_3=\bigl<x^{p_1^{v_1}p_2^{v_2}\dots p_k^{v_k}},y\bigr>$, certainly $H_2$ also contains $x^iy$, for some $i$. Let $K_l=H_l\backslash\{x^iy\hspace{0.2cm}|\hspace{0.2cm}0\le i \le 2m-1\}$, then $K_l \le H_l,$ for $l=1,2,3$ and therefore, by the choices of $H_1,H_3$ and by Theorem 2.10, $[K_1], [K_2],[K_3],[K_1]\vee' [K_3],[K_1]\wedge' [K_3]$ are distinct in $\text{AutCl}(Q_{4m})$, as shown in Figure 6, which is not possible by case 1. \vspace{0.2cm}\\
\textbf{Subcase 2.2:} If only subgroups of $[H_1]$ contains $x^iy$, for some $i$, then without the loss of generality, $H_1=\bigl<x^{p_1^{u_1}p_2^{u_2}\dots p_k^{u_k}},y\bigr>$ and $H_3=\bigl<x^{2^\beta p_1^{v_1}\dots p_k^{v_k}}\bigr>$, therefore, $H_2$ contains $x^iy$, for some $i$. Let $K_l=H_l\backslash\{x^iy\hspace{0.2cm}|\hspace{0.2cm}0\le i \le 2m-1\}$, then $K_l \le H_l,$ for $l=1,2,3$ and by Theorem 2.10, $[K_1]\vee' [K_3]=[K_2]\vee'[K_3]=[\bigl<x^{p_1^{\min\{u_1,v_1\}}p_2^{\min\{u_2,v_2\}}\dots p_k^{\min\{u_k,v_k\}}}\bigr>]$ and $[K_1]\wedge' [K_3]=[K_2]\wedge'[K_3]=[\bigl<x^{2^{\beta}p_1^{\text{max}\{u_1,v_1\}}p_2^{\text{max}\{u_2,v_2\}}\dots p_k^{\text{max}\{u_k,v_k\}}}\bigr>]$. By a similar argument as in Case 5 of Theorem 2.7, we have distinct $[K_1],[K_2],[K_3],[K_1]\vee'[K_3],[K_1]\wedge' [K_3]\in \text{AutCl}(Q_{4m})$ as in Figure 6, which is not possible by case 1. 
\begin{figure}[h!]
    \centering
    \begin{tikzpicture}[scale=1.5,  every node/.style={circle, fill=black, minimum size=6pt, inner sep=0pt}]
				
				% Nodes in set U
				\node[label=left:${[K_2]}$] (u2) at (0,1) {};
				\node[label=left:${[K_1]}$] (u1) at (0,0.5) {};
				\node[label=right:${[K_1]\vee'[K_3]}$] (u5) at (0.5,1.5) {};
                \node[label=right:${[K_3]}$] (u3) at (1,0.75) {};
				\node[label=right:${[K_1]\wedge'[K_3]}$] (u4) at (0.5,0) {};
				
				% Edges
				\draw (u1) -- (u2);
				\draw (u2) -- (u5);
                \draw (u1) -- (u4);
                \draw (u4) -- (u3);
                \draw (u3) -- (u5);

		\end{tikzpicture}
        \caption{}
\end{figure}\\
\textbf{Subcase 2.3:} If only subgroups in $[H_3]$ contains $x^iy$, for some $i$, then without the loss of generality, let $H_1=\bigl<x^{2^{\beta}p_1^{u_1}p_2^{u_2}\dots p_k^{u_k}}\bigr>$ and $H_3=\bigl<x^{p_1^{v_1}p_2^{v_2}\dots p_k^{v_k}},y\bigr>$, then certainly $H_2$ does not contain $x^iy$, for any $i$. Let $K_l=H_l \backslash \{x^iy\hspace{0.2cm}|\hspace{0.2cm} 0\le i\le 2m-1\} $, for $l=1,2,3$, then clearly, $K_l\le H_l$ and by Theorem 2.10, $[K_1]\vee' [K_3]=[K_2]\vee'[K_3]=[\bigl<x^{p_1^{\min\{u_1,v_1\}}p_2^{\min\{u_2,v_2\}}\dots p_k^{\min\{u_k,v_k\}}}\bigr>]$ and $[K_1]\wedge' [K_3]=[K_2]\wedge'[K_3]=[\bigl<x^{2^{\beta}p_1^{\text{max}\{u_1,v_1\}}p_2^{\text{max}\{u_2,v_2\}}\dots p_k^{\text{max}\{u_k,v_k\}}}\bigr>]$. By a similar argument as in Case 4 of Theorem 2.7, we have distinct $[K_1],[K_2],[K_3],[K_1]\vee'[K_3],[K_1]\wedge' [K_3]\in \text{AutCl}(Q_{4m})$ as in Figure 6, which is not possible by case 1.
    \end{proof}
    \begin{theorem}
        For positive integer $m$, the lattice $\text{AutCl}(Q_{4m})$ does not contain a sublattice isomorphic to diamond $(M_3)$.
    \end{theorem}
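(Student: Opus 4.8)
The plan is to mirror the proof of Theorem 2.8 and to invoke Birkhoff's criterion (Theorem 2.6): since Theorem 2.11 already rules out $N_5$, it remains only to show that $\text{AutCl}(Q_{4m})$ contains no sublattice isomorphic to $M_3$. The cases $m=1,2$ are immediate, since $\text{AutCl}(Q_4)\cong C_3$ and $\text{AutCl}(Q_8)\cong C_4$ by Examples 1 and 2, so assume $m\ge 3$ and write $m=2^{\alpha}p_1^{t_1}\cdots p_k^{t_k}$. Suppose, towards a contradiction, that there is such an $M_3$: pairwise incomparable classes $[H_1],[H_2],[H_3]$ with a common join $[T]:=[H_i]\vee'[H_j]$ and a common meet $[B]:=[H_i]\wedge'[H_j]$ for every $i\neq j$, the five classes $[H_1],[H_2],[H_3],[T],[B]$ being distinct.

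The engine is the dichotomy supplied by Theorem 1.2: every subgroup of $Q_{4m}$ is either of \emph{type (1)}, i.e.\ contained in $\langle x\rangle$, namely $\langle x^d\rangle$ with $d\mid 2m$, or of \emph{type (2)}, i.e.\ containing some $x^iy\notin\langle x\rangle$, namely $\langle x^d,x^iy\rangle$ with $d\mid m$. Since by Theorem 2.9 every automorphism of $Q_{4m}$ acts by $x\mapsto x^a$, $y\mapsto x^by$, it preserves this dichotomy; moreover $\gcd(ad,2m)=d$, so each $\langle x^d\rangle$ is fixed, whence every type-(1) class is a singleton, a type-(1) class can never equal a type-(2) class, and, by Case 1 of Theorem 2.10, the type-(1) classes form a sublattice of $\text{AutCl}(Q_{4m})$ isomorphic to the divisor lattice $T(2m)$, which is distributive and hence contains no $M_3$.

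With this in hand I would distinguish cases by the number of type-(2) members among $H_1,H_2,H_3$, relabelling (the diamond is symmetric in the $H_i$) so that the type-(2) ones come first. If none is of type (2), then by Case 1 of Theorem 2.10 all five classes lie in the sublattice $T(2m)$, so they would form an $M_3$ inside a distributive lattice, which is impossible; equivalently, one compares orders of generators in $[H_1]\vee'[H_2]=[H_1]\vee'[H_3]$ and in the corresponding meets prime by prime, including the prime $2$, exactly as in Case 1 of Theorem 2.8, to force $[H_2]=[H_3]$. If exactly one, say $H_1$, is of type (2), then $[T]=[H_2]\vee'[H_3]$ is of type (1), being a join of subgroups of $\langle x\rangle$, while $[T]=[H_1]\vee'[H_2]$ is of type (2) by Case 2 of Theorem 2.10 --- a contradiction. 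If exactly two, say $H_1,H_2$, are of type (2), then $[B]=[H_1]\wedge'[H_2]$ is of type (2) by Case 3 of Theorem 2.10, while $[B]=[H_1]\wedge'[H_3]$ is of type (1) by Case 2 of Theorem 2.10 --- again a contradiction. Finally, if all three are of type (2), write $H_i=\langle x^{d_i},x^{j_i}y\rangle$ and set $K_i:=H_i\cap\langle x\rangle=\langle x^{d_i}\rangle$; by Theorem 2.10 the joins and meets of the $[H_i]$ are obtained from those of the $[K_i]$ by re-adjoining $y$, and since a type-(2) class is determined by its value of $d$, the distinctness and pairwise incomparability of $[H_1],[H_2],[H_3],[T],[B]$ transfer to $[K_1],[K_2],[K_3],[T\cap\langle x\rangle],[B\cap\langle x\rangle]$, producing an $M_3$ inside $T(2m)$ --- impossible. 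Every case yields a contradiction, so $\text{AutCl}(Q_{4m})$ has no $M_3$ sublattice; together with Theorem 2.11 and Theorem 2.6 this establishes that $\text{AutCl}(Q_{4m})$ is distributive.

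I expect the main obstacle to be the bookkeeping with the prime $2$: because $\langle x\rangle$ has order $2m$ while type-(2) subgroups only involve divisors of $m$, the $2$-adic exponents behave asymmetrically --- this is precisely the role of the $2^{\beta}$ factors already visible in Theorems 2.10 and 2.11 --- so the reduction in the last case and the generator-order comparison in the first case must be carried out keeping the power of $2$ separate from the odd primes $p_i$. One should also verify that the degenerate type-(2) class $[\langle y\rangle]=[\langle x^m,y\rangle]$ lies below every type-(2) class and hence can never be one of the three incomparable classes of an $M_3$, so it introduces no extra case.
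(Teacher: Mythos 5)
Your proof is correct and follows essentially the same strategy as the paper's: a case split on whether the subgroups lie in $\langle x\rangle$, with the all-cyclic case settled inside the divisor lattice (the paper does this by comparing generator orders), the mixed cases by a type clash of the common join or common meet (the paper's Subcase 2.2), and the all-dicyclic case reduced to the cyclic one by stripping the elements $x^iy$ (the paper's Subcase 2.1). Your organization by the number of type-(2) members among $H_1,H_2,H_3$ is a slightly cleaner packaging of the same argument.
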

    \begin{proof}
        The result is true for $m=1,2$ as, $Q_{4}\cong \mathbb{Z}_4$, so $\text{AutCl}(Q_4)\cong C_3$ and $\text{AutCl}(Q_8)\cong C_4$. For $m\ge 3$, let $m=2^{\alpha}p_1^{t_1}p_2^{t_2}\dots p_k^{t_k}$ be the prime factorization of $m$. Suppose that there exists a sublattice of AutCl($Q_{4m}$) isomorphic to $M_3$, then one can find distinct elements $[H_1],[H_2],[H_3],[H_1]\vee'[H_2],[H_1]\wedge' [H_2] \in \text{AutCl}(Q_{4m})$ as depicted in Figure 7.
\begin{figure}[h]
    \centering
    \begin{tikzpicture}[scale=1.5,  every node/.style={circle, fill=black, minimum size=6pt, inner sep=0pt}]
				
				% Nodes in set U
				\node[label=left:${[H_1]}$] (u1) at (-0.7,0) {};
				\node[label=right:${[H_3]}$] (u3) at (0.7,0) {};
				\node[label=left:${[H_1]\vee'[H_2]}$] (u4) at (0,0.7) {};
				\node[label=left:${[H_1]\wedge'[H_2]}$] (u5) at (0,-0.7) {};
                \node[label=left:${[H_2]}$] (u2) at (0,0) {};
            
				% Edges
				\draw (u1) -- (u4);
				\draw (u4) -- (u3);
				\draw (u3) -- (u5);
				\draw (u5) -- (u1);
                \draw (u5) -- (u2);
                \draw (u2) -- (u4);

		\end{tikzpicture}
        \caption{}
\end{figure}\\
Now, consider the following cases: \vspace{0.2cm} \\
\textbf{Case 1:} If both $H_1$ and $H_2$ are such that $[H_1]=[\bigl<x^{2^{\beta_1}p_1^{u_1}p_2^{u_2}\dots p_k^{u_k}}\bigr>]$ and $[H_2]=$ $[\bigl<x^{2^{\beta_2}p_1^{v_1}p_2^{v_2}\dots p_k^{v_k}}\bigr>]$, $0\le u_i,v_i\le t_i$, $1\le i\le k$ and $0\le\beta_j \le\alpha$, $j=1,2$, so, $[H_1]\vee' [H_2]=[H_1]\vee' [H_3]=[H_2]\vee'[H_3]=[K]$, where $K=\bigl<x^{2^{\min\{\beta_1,\beta_2\}}p_1^{\min\{u_1,v_1\}}p_2^{\min\{u_2,v_2\}}\dots p_k^{\min\{u_k,v_k\}}}\bigr> $. Therefore, $H_3$ is a subgroup of $\bigl<x\bigr>$. Let $H_3=\bigl<x^{2^{\beta'}p_1^{l_1}\dots p_k^{l_k}}\bigr>$, as, each class is singleton and $[H_1]\vee' [H_2]=[H_1]\vee' [H_3]$, we have $\bigl<x^{2^{\min\{\beta_1,\beta_2\}}p_1^{\min\{u_1,v_1\}}p_2^{\min\{u_2,v_2\}}\dots p_k^{\min\{u_k,v_k\}}}\bigr>$ = $\bigl<r^{2^{\min\{\beta_1,\beta'\}}p_1^{\min\{u_1,l_1\}}p_2^{\min\{u_2,l_2\}}\dots p_k^{\min\{u_k,l_k\}}}\bigr>$. On comparing the order of their generators, we get, 
\begin{equation*}
 \frac{2m}{\gcd(2m, 2^{\min\{\beta_1,\beta_2\}}p_1^{\min\{u_1,v_1\}}\dots p_k^{\min\{u_k,v_k\}})}=   \frac{2m}{\gcd(2m, 2^{\min\{\beta_1,\beta'\}}p_1^{\min\{u_1,l_1\}}\dots p_k^{\min\{u_k,l_k\}}   )}
\end{equation*}
and which implies $2^{\min\{\beta_1,\beta_2\}}p_1^{\min\{u_1,v_1\}}\dots p_i^{\min\{u_k,v_k\}}=2^{\min\{\beta_1,\beta'\}}p_1^{\min\{u_1,l_1\}}\dots p_k^{\min\{u_k,l_k\}}$. Therefore, $\min\{u_i,v_i\}=\min\{u_i,l_i\}$, for all $i$, and $\min\{\beta_1,\beta_2\}=\min\{\beta_1,\beta'\}$. Furthermore, as $[H_1]\wedge' [H_2]=[H_1]\wedge'[H_3]$, we have $\bigl<r^{2^{\max\{\beta_1,\beta_2\}}p_1^{\text{max}\{u_1,v_1\}}p_2^{\text{max}\{u_2,v_2\}}\dots p_k^{\text{max}\{u_k,v_k\}}}\bigr>=\\\bigl<r^{2^{\max{\{\beta_1,\beta'\}}}p_1^{\text{max}\{u_1,l_1\}}p_2^{\text{max}\{u_2,l_2\}}\dots p_k^{\text{max}\{u_k,l_k\}}}\bigr>$. On comparing the order of their generators, we get,
\begin{equation*}
\frac{2m}{\gcd(2m, 2^{\max\{\beta_1,\beta_2\}}p_1^{\text{max}\{u_1,v_1\}}\dots p_k^{\text{max}\{u_k,v_k\}})}=\frac{2m}{\gcd(2m, 2^{\max\{\beta_1,\beta'\}}p_1^{\text{max}\{u_1,l_1\}}\dots p_k^{\text{max}\{u_k,l_k\}})}
\end{equation*}
and which implies $2^{\max\{\beta_1,\beta_2\}}p_1^{\text{max}\{u_1,v_1\}}\dots p_k^{\text{max}\{u_k,v_k\}}=2^{\max\{\beta_1,\beta'\}}p_1^{\text{max}\{u_1,l_1\}}\dots p_i^{\text{max}\{u_k,l_k\}}$. Therefore, for all $i$, $\text{max}\{u_i,v_i\}=\text{max}\{u_i,l_i\}$ and $\max\{\beta_1,\beta_2\}=\max\{\beta_1,\beta'\}$ which implies $\beta'=\beta_2$ and $l_i=v_i$, for all $i$, and consequently, $[H_2]=[H_3]$, a contradiction.\vspace{0.2cm}\\
\textbf{Case 2:} Let $H_1$ and $H_2$ are subgroups of $Q_{4m}$ with atleast one subgroup in $[H_1]$ or $[H_2]$ contains $x^iy$, for some $i$. \vspace{0.2cm}\\
\textbf{Subcase 2.1:} If both $[H_1]$ and $[H_2]$ contains subgroups containing $x^iy$, for some $i$, then subgroups in the class $[H_3]$ also contains $x^iy$, for some $i$. Now, let $K_l=H_l\backslash\{x^iy\hspace{0.2cm}|\hspace{0.2cm}0\le i\le 2m-1\}$, for $l=1,2,3$, so that, $K_l\le H_l$ and by the choices of $H_1,H_2$ and by Theorem 2.10, $[K_1], [K_2],[K_3],[K_1]\vee' [K_2],[K_1]\wedge' [K_2]$ are distinct in $\text{AutCl}(Q_{4m})$, as shown in Figure 8, which is not possible by case 1. 
\begin{figure}[h]
    \centering
    \begin{tikzpicture}[scale=1.5,  every node/.style={circle, fill=black, minimum size=6pt, inner sep=0pt}]
				
				% Nodes in set U
				\node[label=left:${[K_1]}$] (u1) at (-0.7,0) {};
				\node[label=right:${[K_3]}$] (u3) at (0.7,0) {};
				\node[label=left:${[K_1]\vee'[K_2]}$] (u4) at (0,0.7) {};
				\node[label=left:${[K_1]\wedge'[K_2]}$] (u5) at (0,-0.7) {};
                \node[label=left:${[K_2]}$] (u2) at (0,0) {};
            
				% Edges
				\draw (u1) -- (u4);
				\draw (u4) -- (u3);
				\draw (u3) -- (u5);
				\draw (u5) -- (u1);
                \draw (u5) -- (u2);
                \draw (u2) -- (u4);

		\end{tikzpicture}
        \caption{}
\end{figure}\\
\textbf{Subcase 2.2:} If $[H_1]$ contains a subgroup containing $x^iy$, for some $i$, but not $[H_2]$, then as $[H_1]\vee'[H_2]=[H_2]\vee'[H_3]$, so, $[H_3]$ contain subgroups containing $x^iy$, for some $i$, and hence, by Theorem 2.10, subgroups in the class $[H_1]\wedge'[H_3]$ also contains $x^iy$, for some $i$, but this is not possible as, by Theorem 2.10, no subgroups in $[H_2]\wedge'[H_3]$ contains $x^iy$, for any $i$.
    \end{proof}
\section{Finite Groups whose Automorphic Classes are Chain}
In order to characterize $\text{AutCl}(G)$ to be a chain, we essentially need the following results. 
\begin{theorem} \cite{suzuki1986group} The following three conditions on a $p$-group are equivalent.
\begin{enumerate}
    \item Every abelian subgroup is cyclic.
    \item There is exactly one subgroup of order $ p$.
    \item The group $ G$ is either cyclic or a generalized quaternion group $Q_{2^n}, n\ge3.$.
\end{enumerate} \end{theorem}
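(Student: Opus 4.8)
The plan is to prove the three conditions equivalent via the cycle $(3)\Rightarrow(2)\Rightarrow(1)\Rightarrow(2)\Rightarrow(3)$, with the implication $(2)\Rightarrow(3)$ carrying essentially all of the difficulty. For $(3)\Rightarrow(2)$ I would note that a cyclic $p$-group has exactly one subgroup of each order dividing its order, and that in $G=Q_{2^n}=\langle x,y\mid x^{2^{n-1}}=e,\ y^2=x^{2^{n-2}},\ yxy^{-1}=x^{-1}\rangle$ a short computation gives $(x^{i}y)^2=x^{2^{n-2}}\neq e$ for all $i$, so $x^{2^{n-2}}$ is the unique involution and $\langle x^{2^{n-2}}\rangle$ the unique subgroup of order $2$. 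For $(2)\Rightarrow(1)$: if $A\le G$ is abelian then each of its order-$p$ subgroups is an order-$p$ subgroup of $G$, so $A$ has at most one; writing $A\cong\mathbb{Z}_{p^{a_1}}\times\dots\times\mathbb{Z}_{p^{a_r}}$ by the structure theorem, the number of its subgroups of order $p$ is $(p^{r}-1)/(p-1)$, which is $1$ only when $r=1$, i.e. $A$ is cyclic. For $(1)\Rightarrow(2)$ I would argue the contrapositive: if $G$ has two distinct subgroups of order $p$, then since $G$ is a nontrivial $p$-group $\Omega=\{\,g\in Z(G):g^{p}=e\,\}$ is a nontrivial elementary abelian central subgroup; if $|\Omega|\ge p^{2}$ it is already a non-cyclic abelian subgroup, and otherwise $\Omega$ is the unique order-$p$ subgroup of $Z(G)$, so for any other order-$p$ subgroup $\langle a\rangle$ the product $\langle a\rangle\Omega$ is abelian of order $p^{2}$ with two distinct order-$p$ subgroups, hence isomorphic to $\mathbb{Z}_p\times\mathbb{Z}_p$; either way condition $(1)$ fails.

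For the main implication $(2)\Rightarrow(3)$ I would induct on $|G|$. The cases $|G|=p$ and $|G|=p^{2}$ are immediate from $(2)$, the latter because $(2)$ excludes $\mathbb{Z}_p\times\mathbb{Z}_p$. For $|G|=p^{n}$ with $n\ge 3$, every proper subgroup of $G$ inherits condition $(2)$ (its order-$p$ subgroups are order-$p$ subgroups of $G$), so by the inductive hypothesis every maximal subgroup of $G$ is cyclic or generalized quaternion. If $G$ is abelian, applying $(2)\Rightarrow(1)$ to $G$ itself shows $G$ is cyclic. If $G$ is non-abelian and possesses a cyclic maximal subgroup, then $G$ has a cyclic subgroup of index $p$, and I would invoke the classification of $p$-groups with a cyclic subgroup of index $p$, testing its members (the cyclic group; $\mathbb{Z}_{p^{n-1}}\times\mathbb{Z}_p$; the modular maximal-cyclic group; and, when $p=2$, the dihedral, semidihedral and generalized quaternion groups) against condition $(2)$: only the cyclic group and $Q_{2^{n}}$ have a unique subgroup of order $p$, so $G\cong Q_{2^{n}}$ and $p=2$.

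The remaining configuration, which I expect to be the genuine obstacle, is that of a non-abelian $2$-group $G$ all of whose proper subgroups are cyclic or generalized quaternion but with no cyclic maximal subgroup, i.e.\ with every maximal subgroup generalized quaternion. I would close the argument by ruling this out: using that each such maximal subgroup contains a normal cyclic subgroup of index $2$ in it (with the order-$16$ case, where the maximal subgroups are copies of $Q_8$, checked directly) and comparing these subgroups across the maximal subgroups, one shows that $G$ must after all contain a cyclic subgroup of index $2$, collapsing back to the previous case. This reduction, together with the classification of $2$-groups possessing a cyclic subgroup of index $2$ on which the previous paragraph rests, is the one genuinely delicate ingredient; all the other steps are routine verifications.
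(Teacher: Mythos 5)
This theorem is quoted in the paper from Suzuki's book without proof, so there is no in-paper argument to compare against; I can only assess your proposal on its own terms. The easy implications are fine: $(3)\Rightarrow(2)$, $(2)\Rightarrow(1)$ and $(1)\Rightarrow(2)$ are all correct as written (in particular, using $\Omega_1(Z(G))$ together with a second subgroup of order $p$ to manufacture a copy of $\mathbb{Z}_p\times\mathbb{Z}_p$ is the standard trick). The inductive setup for $(2)\Rightarrow(3)$ is also sound, and invoking the classification of $p$-groups with a cyclic subgroup of index $p$ is legitimate, though you should note that this classification is itself a theorem of depth comparable to the one being proved, so you are importing much of the difficulty rather than discharging it.

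The genuine gap is exactly where you flag it: the case of a non-abelian $2$-group $G$ all of whose maximal subgroups are generalized quaternion. Your proposed resolution --- ``comparing these subgroups across the maximal subgroups, one shows that $G$ must after all contain a cyclic subgroup of index $2$'' --- is not an argument; it restates the desired conclusion of that case without supplying a mechanism, and this case is the real mathematical content of the theorem for $p=2$. It can be closed: for $|G|=2^{n}$ with $n\ge 5$, each maximal subgroup $M_i\cong Q_{2^{n-1}}$ has a unique, hence characteristic, cyclic subgroup $C_i$ of order $2^{n-2}$, which is therefore normal in $G$; if all the $C_i$ coincide in a single $C$, one computes the action of $G/C\cong \mathbb{Z}_2\times\mathbb{Z}_2$ on $C$ inside $\mathrm{Aut}(\mathbb{Z}_{2^{n-2}})$ and finds an element acting by $c\mapsto c^{2^{n-3}+1}$ lying in some quaternion maximal subgroup, where every element outside the cyclic part must invert --- impossible; if $C_1\ne C_2$, then $C_1C_2$ is either a maximal subgroup containing two distinct cyclic subgroups of index $2$ (impossible in $Q_{2^{n-1}}$ for $n-1\ge 4$) or all of $G$, which a short commutator computation rules out; and the order-$16$ base case needs its own check (e.g.\ via the nonexistence of a nondegenerate alternating form on a $3$-dimensional $\mathbb{F}_2$-space, or by inspecting the groups of order $16$). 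None of this is in your write-up, and until it is, $(2)\Rightarrow(3)$ is not proved. A standard alternative that avoids the case split entirely is to take a maximal normal abelian subgroup $A$, note that it is cyclic by $(2)\Rightarrow(1)$ and self-centralizing, and analyze the embedding of $G/A$ into $\mathrm{Aut}(A)$.
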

\begin{theorem} \cite{suzuki1982group} Let $A$ be an abelian normal subgroup of maximal order of a $p$-group $G$. If $|G| = p^n$ and $|A| = p^a$, we have $2n \le a(a + 1)$. \end{theorem}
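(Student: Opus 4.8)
The plan is to reduce the whole estimate to the fact that $G/A$ acts faithfully and ``unipotently'' on $A$ by conjugation. The first and most important step is to exploit the maximality of $A$ to show that $C_G(A)=A$. Since $A$ is abelian, $A\subseteq C_G(A)$, and since $A\trianglelefteq G$ we have $C_G(A)\trianglelefteq G$. If $C_G(A)\supsetneq A$, then $C_G(A)/A$ is a nontrivial normal subgroup of the $p$-group $G/A$, so it meets $Z(G/A)$ nontrivially; choose $x\in C_G(A)\setminus A$ with $xA\in Z(G/A)$. Then $B:=A\langle x\rangle$ is a subgroup (as $A\trianglelefteq G$), it is abelian because $x$ centralizes $A$ and $A$ is abelian, and it is normal in $G$ because $A\trianglelefteq G$ and $gxg^{-1}\in xA\subseteq B$ for all $g\in G$ (using that $xA$ is central in $G/A$). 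Since $x\notin A$ we get $|B|>|A|$, contradicting the choice of $A$. Hence $C_G(A)=A$.

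Next, conjugation defines a homomorphism $G\to\mathrm{Aut}(A)$ with kernel $C_G(A)=A$, so $G/A$ (a group of order $p^{\,n-a}$) embeds in $\mathrm{Aut}(A)$. To obtain the sharp exponent I would not bound $|\mathrm{Aut}(A)|$ directly; instead, fix a chief series $1=A_0\trianglelefteq A_1\trianglelefteq\cdots\trianglelefteq A_a=A\trianglelefteq\cdots\trianglelefteq A_n=G$ of $G$ passing through $A$. Chief factors of a $p$-group are central of order $p$, so conjugation by any element of $G$ stabilizes each $A_i$ (for $i\le a$) and acts trivially on $A_i/A_{i-1}$; thus the image of $G/A$ lies in the subgroup $U\le\mathrm{Aut}(A)$ of automorphisms stabilizing the flag $A_0<A_1<\cdots<A_a$ and acting trivially on every layer. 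The key estimate is $|U|\le p^{\binom{a}{2}}$. For this I would filter $U$ by $U_k:=\{\alpha\in U:(\alpha-\mathrm{id})(A_i)\subseteq A_{i-k}\ \text{for all }i\}$ (with $A_j:=1$ for $j\le 0$), a descending chain of subgroups with $U_1=U$ and $U_a=\{1\}$, and check that $\alpha\mapsto\big((\alpha-\mathrm{id})|_{A_i/A_{i-1}}\bmod A_{i-k-1}\big)_i$ is a homomorphism embedding $U_k/U_{k+1}$ into $\bigoplus_{i=k+1}^{a}\mathrm{Hom}(A_i/A_{i-1},A_{i-k}/A_{i-k-1})\cong(\mathbb{Z}_p)^{a-k}$. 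Multiplying $|U_k/U_{k+1}|\le p^{a-k}$ over $k=1,\dots,a-1$ gives $|U|\le p^{(a-1)+(a-2)+\cdots+1}=p^{\binom{a}{2}}$.

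Combining the two steps, $p^{\,n-a}=|G/A|\le|U|\le p^{\,a(a-1)/2}$, hence $n-a\le a(a-1)/2$, which rearranges to $2n\le a(a+1)$. I expect the main obstacle to be the second step: the clean count ``$U$ is the unitriangular group, so $|U|=p^{\binom{a}{2}}$'' is only literally true when $A$ is elementary abelian, so one must argue via the filtration $\{U_k\}$ to see that for a general finite abelian $p$-group each successive flag layer still contributes at most one factor of $p$ per admissible index, and thereby recover the bound $p^{\binom{a}{2}}$ uniformly.
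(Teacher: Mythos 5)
The paper does not prove this statement; it is quoted verbatim from Suzuki's book as a known result (Theorem 3.2, cited as \cite{suzuki1982group}), so there is no in-paper argument to compare against. Your proof is correct and is essentially the classical argument from the literature: the maximality of $A$ forces $C_G(A)=A$ (via a central element of $C_G(A)/A$ in $G/A$ producing a larger abelian normal subgroup), so $G/A$ embeds in the stability group $U$ of a chief series of $G$ refining $1\trianglelefteq A$, and the filtration $U=U_1\supseteq U_2\supseteq\cdots\supseteq U_a=1$ with $|U_k/U_{k+1}|\le p^{a-k}$ gives $|U|\le p^{\binom{a}{2}}$, whence $n-a\le a(a-1)/2$. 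All the steps check out, including the two points that need $A$ abelian (so that $\alpha-\mathrm{id}$ is an endomorphism and the layer maps are additive in $\alpha$) and the observation that $(\alpha-\mathrm{id})\circ(\beta-\mathrm{id})$ drops by $2k\ge k+1$ levels, which is what makes the quotient map into $\bigoplus_i\mathrm{Hom}(A_i/A_{i-1},A_{i-k}/A_{i-k-1})$ a homomorphism.
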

\begin{theorem} Let $G$ be a finite group. The poset $\text{AutCl}(G)$ is a chain if and only if $G$ is one of the following: \begin{enumerate}
    \item A cyclic $p$-group,
    \item An elementary abelian $p$-group, 
    \item Quaternion group of order 8.
\end{enumerate} 
\end{theorem}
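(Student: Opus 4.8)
The plan is to prove both implications; the forward direction is routine and the converse carries the content. First, each group in the list has $\text{AutCl}(G)$ a chain. If $G$ is a cyclic $p$-group, then $\text{AutCl}(G)\cong L(G)$ is a chain by the first example in Section~2. If $G\cong(\mathbb{Z}_p)^n$, then $\text{Aut}(G)\cong GL_n(\mathbb{F}_p)$ acts transitively on the set of subgroups of each fixed order, and a $k$-dimensional subspace can be carried into any subspace of dimension at least $k$; hence $\text{AutCl}(G)$ is a chain of length $n+1$, ordered by dimension. Finally $\text{AutCl}(Q_8)\cong C_4$ by the second example in Section~2.

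For the converse, assume $\text{AutCl}(G)$ is a chain. I would record two observations. (a) If $|H|=|K|$ then $[H]$ and $[K]$ are comparable, and the size argument used in the proof of Lemma~2.1 forces $[H]=[K]$; hence all subgroups of a given order are automorphic, and in particular isomorphic. (b) If $|H|<|K|$ then $[H]\lesssim[K]$, so some automorphism of $G$ carries $H$ into $K$. Applying (b) to a subgroup of order $p$ and a subgroup of order $q$, where $p\neq q$ are primes dividing $|G|$ (which exist by Cauchy's theorem), would place a subgroup of order $p$ inside a group of prime order $q$, which is impossible. Hence $G$ is a $p$-group.

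Now I would split on the number of subgroups of order $p$ in $G$. If there is exactly one, then by Theorem~3.1 the group $G$ is cyclic, which is case~(1), or a generalized quaternion group $Q_{2^n}$ with $n\ge 3$; and if $n\ge 4$ then $Q_{2^n}=Q_{4m}$ with $m=2^{n-2}\ge 4$, so the subgroups $\langle x\rangle$ and $\langle x^2,y\rangle$ from Theorem~1.2 both have order $2^{n-1}$, yet the first is cyclic and the second is not (it is non-abelian, since $x^2$ has order $2^{n-2}>2$), contradicting~(a); hence $n=3$ and $G\cong Q_8$, which is case~(3). If instead $G$ has more than one subgroup of order $p$, then $G$ contains a copy of $\mathbb{Z}_p\times\mathbb{Z}_p$: since $Z(G)\neq\{e\}$, either $Z(G)$ is non-cyclic and already contains such a subgroup, or $Z(G)$ is cyclic, in which case its unique minimal subgroup $N$ is central and $AN\cong\mathbb{Z}_p\times\mathbb{Z}_p$ for any minimal subgroup $A\neq N$ of $G$. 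Then, by (a), every subgroup of order $p^2$ is isomorphic to $\mathbb{Z}_p\times\mathbb{Z}_p$, so $G$ has no element of order $p^2$ and therefore has exponent $p$. If $G$ were non-abelian, then $p$ is odd and any $g\in G\setminus Z(G)$ generates a non-central subgroup $\langle g\rangle$ of order $p$; but $Z(G)$ is characteristic, so any automorphic image of a subgroup of order $p$ lying in $Z(G)$ is again central, whence $\langle g\rangle$ belongs to a different automorphism class, contradicting~(a). Hence $G$ is abelian of exponent $p$, that is, an elementary abelian $p$-group, which is case~(2).

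The step I expect to be the crux is the non-abelian analysis: one must exclude \emph{every} non-abelian $p$-group, and the route above does so by funneling such a group either into the unique-minimal-subgroup situation --- disposed of by Theorem~3.1 together with the explicit pair of subgroups of $Q_{2^n}$ --- or into the exponent-$p$ situation, where a non-central element of order $p$ produces the contradiction. Theorem~3.2 offers an alternative tool here, should one prefer to bound the order of a maximal abelian normal subgroup rather than argue via the exponent.
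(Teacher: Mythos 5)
Your proof is correct, and while its skeleton (reduce to a $p$-group, split on whether the subgroup of order $p$ is unique, force exponent $p$ in the non-unique case) coincides with the paper's, the crux --- eliminating non-abelian groups of exponent $p$ --- is handled by a genuinely different and simpler argument. The paper takes a maximal abelian normal subgroup $A$ with $|A|=p^a$, argues $a\le 2$, invokes Theorem~3.2 to get $|G|\le p^3$, identifies $G$ as $\text{Heis}(\mathbb{Z}_p)$, and then shows by a case analysis on $y^{-1}xy$ that $\bigl<x\bigr>$ is not normal while $\bigl<z\bigr>$ is, so the two classes of order-$p$ subgroups are incomparable. You instead observe that $Z(G)$ is characteristic, so no automorphism can carry a central subgroup of order $p$ (which exists since $Z(G)\ne\{e\}$ and $\exp(G)=p$) onto the non-central subgroup $\bigl<g\bigr>$ for $g\notin Z(G)$; this contradicts your observation (a) that a chain forces all subgroups of equal order into one class. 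Your route avoids Theorem~3.2 and the classification of non-abelian groups of order $p^3$ entirely, and kills all non-abelian exponent-$p$ groups at once rather than only the order-$p^3$ case. Similarly, in the generalized quaternion case the paper merely asserts that $[Q_{2^{n-1}}]$ and $[\mathbb{Z}_{2^{n-1}}]$ are distinct coatoms for $n\ge4$, whereas you exhibit the explicit pair $\bigl<x\bigr>$ and $\bigl<x^2,y\bigr>$ of equal order, one cyclic and one not --- the same idea, but with the verification supplied. The paper's detour through Theorem~3.2 does buy one thing your argument does not: an explicit identification of the ``borderline'' group $\text{Heis}(\mathbb{Z}_p)$, which the authors then return to in their concluding conjecture.
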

\begin{proof} Suppose that $\text{AutCl}(G)$ is a chain. Then $G$ must be a $p$-group else there exist distinct prime factors $p_1$ and $p_2$ of $|G|$. By Sylow's first theorem there exist subgroups $H_1$ and $H_2$ of $G$ of order $p_1$ and $p_2$, respectively. Therefore, $[H_1]$ and $[H_2]$ are not comparable in $\text{AutCl}(G)$, which is a contradiction. Therefore, $|G|=p^n$, for some $n$, and choose a minimal normal subgroup $H$ of $G$. It is known that $p$-group of order $p^n$ has a normal subgroup of order $p^k$ for each $k$, where $0\le k\le n$ \cite{Dummit_Foote_}. Therefore, as $H$ is minimal normal, the order of $H$ is $p$. \vspace{0.2cm} \\
\textbf{Case 1:} If $H$ is the unique subgroup of order $p$ of $G$, then by Theorem 3.1, $G$ is either a cyclic or a generalized quaternion group $Q_{2^n}, n\ge 3.$ For $n\ge 4$, we have $[Q_{2^{n-1}}]$ and $[\mathbb{Z}_{2^{n-1}}]$ are distinct coatoms of $\text{AutCl}(Q_{2^n})$. Therefore, for $n\ge 4$, $\text{AutCl}(Q_{2^n})$ is not a chain and in this case the only possible group $G$ with $\text{AutCl}(G)$ being a chain is cyclic $p$-group or the quaternion group $Q_8$. \vspace{0.2cm}\\
\textbf{Case 2:} If $H$ is not the unique subgroup of $G$ of order $p$, then $G$ has a minimal subgroup $K$ with $K\ne H$. As, $|HK|=\frac{|H \times K|}{|H\wedge K|}$, we have $HK$ is a subgroup of order $p^2$. Note that $HK\cong H\times K$. So $HK$ is an elementary abelian group of order $p^2$. Now, if $G$ contains a cyclic group of order $p^2$, say $H_1$ then the class $[H_1]$ is distinct and incomparable from $[HK]$ as $H_1$ is cyclic of order $p^2$ and $HK$ is an elementary abelian group of order $p^2$, this would contradict the fact that $\text{AutCl}(G)$ is a chain. Consequently, $\text{exp}(G)=p$. \vspace{0.2cm}\\
\textbf{Subcase 2.1:} If $G$ is abelian, then clearly it is an elementary abelian $p$-group. \vspace{0.2cm}\\
\textbf{Subcase 2.2:} If $G$ is non abelian, then $G$ contains a non abelian subgroup of order $p^3$, say $N$. Let $A$ be an abelian normal subgroup of maximal order of $G$ with $|A|=p^a$. If $a\ge 3$ then $A$ has a subgroup $A_1$ of order $p^3$. Note that $A_1$ is abelian and therefore $[A_1]$ and $[N]$ are incomparable, a contradiction. Consequently, $a\in \{1,2\}$. By Theorem 3.2, we have that $2n\le a(a+1)$ and which implies $n\le 3$. For $n=1,2$, we have $G$ is abelian and therefore $n=3$. Hence, $G$ is a non abelian $p$-group of order $p^3$ and exponent $p$.\par
Conversely, if $G$ is a cyclic $p$-group then $G\cong \mathbb{Z}_{p^n}$, for some $n$, by example 1, $\text{AutCl}(\mathbb{Z}_{p^n})\cong L(\mathbb{Z}_{p^n})$. As $L(\mathbb{Z}_{p^n})$ is a chain, so is $\text{AutCl}(\mathbb{Z}_{p^n})$. Also, it is clear that $\text{AutCl}(Q_8)$ is a chain. \par Now, let $G=\underbrace{\mathbb{Z}_p\times...\times \mathbb{Z}_p}_\textrm{\emph{n} copies}$ be the elementary abelian $p$ group of order $p^n$. Let $H$ be a subgroup of $G$. Since $G$ is elementary abelian $p$-group, so is $H$. Moreover, $H$ is a subspace of $G$ over $\mathbb{Z}_p$, so choose a basis $B'$=$\{\zeta_1,\zeta_2\dots \zeta_k\}$ of $H$ over $\mathbb{Z}_p$ and note that $H=\bigl<\zeta_{1},\zeta_{2},\dots,\zeta_{k}\bigr>$. Now, extend the set $B'$ to a basis $B=\{\zeta_1,\zeta_2,\dots \zeta_k, \zeta_{k+1},\zeta_{k+2},\dots,\zeta_n\}$ of $\mathbb{Z}_p\times\dots \times \mathbb{Z}_p$ over $\mathbb{Z}_p$. Then the map $f:G \to G$ with $f(e_i)=\zeta_i$ for $1\le i\le n$ is an automorphism of $G$, where $\{e_1,e_2,\dots,e_n\}$ is the standard basis of $\mathbb{Z}_p\times \dots \times \mathbb{Z}_p$ over $\mathbb{Z}_p$ and $f(\underbrace{\mathbb{Z}_p\times\dots \mathbb{Z}_p}_\textrm{\emph{k} copies}\times\{0\}\times \dots\times\{0\})=H$. Therefore, for any divisor $p^k$ of $p^n$, there exists exactly one class of subgroup of order $p^k$ for $0\le k\le n$. Hence, $\text{AutCl}(G)$ is a chain. \par
Now, let $G$ be a non abelian group of order $p^3$ with exponent $p$. Then $G$ is isomorphic to the Heisenberg group $\text{Heis}(\mathbb{Z}_3)$. So, $G\cong \bigl<x,y,z \hspace{0.2cm}|\hspace{0.2cm} xyx^{-1}y^{-1}=z, \hspace{0.2cm}xz=zx, \hspace{0.2cm} yz=zy, \hspace{0.2cm}x^p=y^p=z^p=e \bigr>$. We will show that $[\bigl<x\bigr>]$ and $[\bigl<z\bigr>]$ are incomparable in the poset $\text{AutCl}(G)$, i.e; there exists no $f\in \text{Aut}(G)$ with $f(\bigl<x\bigr>)=\bigl<z\bigr>$. Since $x^{-1}zx=x^{-1}xz=z\in \bigl<z\bigr>$ and $y^{-1}zy=y^{-1}yz=z\in \bigl<z\bigr>$, we have $\bigl<z\bigr>$ is normal in $G$. To show that $[\bigl<x\bigr>]$ and $[\bigl<z\bigr>]$ are incomparable, it is sufficient to show that $\bigl<x\bigr>$ is not normal in $G$. For if $\bigl<x\bigr>$ is normal in $G$, then $y^{-1}xy\in \bigl<x\bigr>$.\vspace{0.2cm}\\
\textbf{Case 1:} If $y^{-1}xy=e$, then $xy=y$ and which implies $x=e$, a contradiction.\vspace{0.2cm}\\ 
\textbf{Case 2:} If $y^{-1}xy=x$ then $xy=yx$. But $xz=zx$ and $yz=zy$ which together implies $G$ is an abelian group, a contradiction. \vspace{0.2cm}\\
\textbf{Case 3:} Lastly, if $y^{-1}xy=x^k$, $2\le k \le p-1$, then, $xy=yx^k$. As, $xyx^{-1}y^{-1}=z$ implies $yx^{k-1}y^{-1}=z$ and this implies $yx^{k-1}=zy=yz$ and which implies $z=x^{k-1}$. Moreover, $yz=zy$ implies that $yx^{k-1}=x^{k-1}y$ and therefore, $y^{-1}x^{k-1}y=x^{k-1}$ i.e., $(y^{-1}xy)^{k-1}=x^{k-1}$. Hence, $x^{k(k-1)}=x^{k-1}$ and this implies $x^{(k-1)^2}=e$. Since, $|x|=p$, so $p$ must divide $(k-1)^2$. As $p$ is a prime, $p$ must divide $k-1$, but $2\le k \le p-1$, a contradiction. Therefore, $\bigl<x\bigr>$ is not normal in $G$. Consequently, $\text{AutCl}(G)$ is not a chain.
\end{proof}
\section{Conclusions and Open Problems}
In this paper, we have shown that $\text{AutCl}(D_n)$ and $\text{AutCl}(Q_{4m})$ form distributive lattices and have characterized all classes of finite groups $G$ for which $\text{AutCl}(G)$ is a chain. Following are some open problems about the poset of automorphic classes of subgroups:
\begin{enumerate}
    \item Determine classes of finite groups $G$ for which $\text{AutCl}(G)$ is a lattice. In particular, determine classes of groups $G$ for which $\text{AutCl}(G)$ is a distributive lattice (or modular lattice).
    \item Let $G_1$ and $G_2$ be two finite groups with $\text{AutCl}(G_1)\cong \text{AutCl}(G_2)$, then what can be said about the groups $G_1$ and $G_2$?
    \item A \emph{projectivity} of two groups is a lattice isomorphism of their subgroup lattices and an \emph{autoprojectivity} of a group is a projectivity from group to itself, thus one can generalize the poset $\text{AutCl}(G)$ associated to a finite group $G$ by considering autoprojectivities instead of group automorphisms. The set of all autoprojectivities of $G$ is denoted by $P(G)$ \cite{Schmidt+1994}. Consider the following set: \begin{equation*}
        \text{ AutCl}'(G)= \{[H]'\hspace{0.2cm} |\hspace{0.2cm} H\in L(G)\},\hspace{0.2cm} \text{where} \hspace{0.2cm}[H]'=\{K\in L(G)\hspace{0.2cm}|\hspace{0.2cm} \text{there is}\hspace{0.1cm} f\in P(G) \hspace{0.1cm}\text{with}\hspace{0.1cm} f(H)=K\}.
    \end{equation*}
    Investigate the above set with respect to an analogous ordering relation as that of $\text{AutCl}(G)$.
    \item It is interesting to know the structure of the poset $\text{AutCl}(G)$, when $G=\text{Heis}(\mathbb{Z}_p)$. We speculate that the poset is isomorphic to Figure 3. We have verified this for several small groups using GAP \cite{GAP4}. However, we failed in proving the following: \vspace{0.2cm}\\
\textbf{Conjecture:} For any odd prime $p$, the poset $\text{AutCl}(\text{Heis}(\mathbb{Z}_p))$ is isomorphic to the poset as shown in Figure 9.
\end{enumerate}
\begin{figure}[h]
    \centering
    \begin{tikzpicture}[scale=1.5,  every node/.style={circle, fill=black, minimum size=6pt, inner sep=0pt}]
				
				% Nodes in set U
				\node[label=left:] (u1) at (0,0) {};
				\node[label=left:] (u2) at (-0.5,0.5) {};
				\node[label=left:] (u3) at (0.5,0.5) {};
                \node[label=left:] (u4) at (0,1) {};
				\node[label=left:] (u5) at (0,1.5) {};
				
				% Edges
				\draw (u1) -- (u2);
				\draw (u2) -- (u4);
                \draw (u4) -- (u5);
                \draw (u1) -- (u3);
                \draw (u3) -- (u4);

		\end{tikzpicture}
        \caption{}
\end{figure}
\section*{Acknowledgement}
The second author is thankful to the Council of Scientific and Industrial Research (CSIR), for financial assistance in the form of Junior Research Fellowship (JRF), bearing the File Number: 09/0414(22038)/2025-EMR-I.

\printbibliography
\end{document}